\numberwithin{equation}{section}
\theoremstyle{plain}
\newtheorem{theorem}[equation]{Theorem}
\newtheorem{corollary}[equation]{Corollary}
\newtheorem{lemma}[equation]{Lemma}
\newtheorem{proposition}[equation]{Proposition}
\theoremstyle{definition}
\newtheorem{definition}[equation]{Definition}
\newtheorem{example}[equation]{Example}
\newcommand{\R}{{\mathbb R}}
\newcommand{\N}{{\mathbb N}}
\newcommand{\Om}{\Omega}
\providecommand{\vint}[1]{\mathchoice
	{\mathop{\vrule width 5pt height 3 pt depth -2.5pt
			\kern -9pt \kern 1pt\intop}\nolimits_{\kern -5pt{#1}}}
	{\mathop{\vrule width 5pt height 3 pt depth -2.6pt
			\kern -6pt \intop}\nolimits_{\kern -3pt{#1}}}
	{\mathop{\vrule width 5pt height 3 pt depth -2.6pt
			\kern -6pt \intop}\nolimits_{\kern -3pt{#1}}}
	{\mathop{\vrule width 5pt height 3 pt depth -2.6pt
			\kern -6pt \intop}\nolimits_{\kern -3pt{#1}}}}
\newcommand{\eps}{\varepsilon}
\newcommand{\loc}{\mathrm{loc}}
\newcommand{\BV}{\mathrm{BV}}
\newcommand{\ch}{\text{\raise 1.3pt \hbox{$\chi$}\kern-0.2pt}}
\DeclareMathOperator{\capa}{Cap}
\DeclareMathOperator{\rcapa}{cap}
\DeclareMathOperator{\dive}{div}
\DeclareMathOperator{\dist}{dist}
\DeclareMathOperator{\diam}{diam}
\DeclareMathOperator{\Lip}{Lip}
\DeclareMathOperator{\Var}{Var}
\DeclareMathOperator{\pV}{pV}
\DeclareMathOperator{\fineint}{fine-int}
\begin{document}
\title[]{Finely quasiconformal mappings
}
\author{Panu Lahti}
\address{Panu Lahti,  Academy of Mathematics and Systems Science, Chinese Academy of Sciences,
	Beijing 100190, PR China, {\tt panulahti@amss.ac.cn}}

\subjclass[2020]{30C65, 46E35, 31C40}
\keywords{Quasiconformal mapping, Sobolev mapping, finely open set}

\begin{abstract}
	We introduce a relaxed version of the metric definition of quasiconformality that
	is natural also for mappings of low regularity, including $W_{\loc}^{1,1}(\R^n;\R^n)$-mappings.
	Then we show on the plane that this relaxed definition can be used to prove Sobolev regularity,
	and that these ``finely quasiconformal'' mappings are in fact quasiconformal.
\end{abstract}

\date{\today}
\maketitle

%

\section{Introduction}

A homeomorphism $f\colon \R^n\to\R^n$ is said to be quasiconformal if
$K_f(x)\le K<\infty$ for all $x\in\R^n$, where
\[
K_f(x):=\limsup_{r\to 0}\left(\frac{\diam f(B(x,r))^n}{|f(B(x,r))|}\right)^{1/(n-1)}.
\]
We always consider $n\ge 2$, and we use $|\cdot|$ for the Euclidean norm as well as for the
Lebesgue measure. There are several equivalent definitions of quasiconformality;
the above is a ``metric'' definition.
As part of an ``analytic definition'',
it is known that quasiconformal mappings are in the Sobolev class
$W_{\loc}^{1,n}(\R^n;\R^n)$.

There has been wide interest in showing that if quasiconformality is assumed in some relaxed sense,
it follows that the mapping in question is in fact quasiconformal,
or at least has some lower regularity, such as $W^{1,1}_{\loc}$-regularity.
For example, Koskela--Rogovin \cite[Corollary 1.3]{KoRo} show that if
$f\colon \R^n\to\R^n$ is a homeomorphism,
$K_f\in L^1_{\loc}(\R^n)$, and $K_f<\infty$ outside a set of
$\sigma$-finite $\mathcal H^{n-1}$-measure, then $f\in W_{\loc}^{1,1}(\R^n;\R^n)$.
Many results in the same vein have been proven starting from
Gehring \cite{Ge62,Ge}, see also
Balogh--Koskela \cite{BaKo},
Fang \cite{Fan},
Heinonen--Koskela--Shanmugalingam--Tyson \cite{HKST},
Kallunki--Koskela \cite{KaKo},
and
Margulis--Mostow \cite{MaMo}.
Several works study specifically the issue of $W^{1,1}_{\loc}$-regularity, see
Balogh--Koskela--Rogovin \cite{BKR},
Kallunki--Martio \cite{KaMa},
and Williams \cite{Wi}.

The quantity
$K_f^{n-1}$ can be essentially thought of as ``$|\nabla f|^n$ divided by the Jacobian determinant''.
Indeed, for a quasiconformal mapping $f\colon \R^n\to\R^n$, we know that
\begin{equation}\label{eq:Kf n minus one}
	K_f(x)^{n-1}|\det \nabla f(x)|
	= \frac{2^n}{\omega_n}\Vert \nabla f(x)\Vert^n
	\quad \textrm{for a.e. }x\in \R^n,
\end{equation}
where $\omega_n$ is the Lebesgue
measure of the unit ball,
$\Vert \cdot\Vert$ is the maximum norm,
and $\nabla f$ can be understood to be either the classical gradient
or the weak gradient.
With the latter interpretation, all of the quantities in \eqref{eq:Kf n minus one}
make sense also for mappings of lower Sobolev regularity, but the equality
can fail already for $W_{\loc}^{1,n}$-mappings
--- let alone $W_{\loc}^{1,1}$-mappings --- since for them $\diam f(B(x,r))$
can easily be $\infty$ for every $x\in \R^n$ and $r>0$; see Example \ref{ex:W11 function}.
The problem is that the quantity $K_f$ is very
sensitive to oscillations, and essentially tailored to mappings $f$ that have better than
$W_{\loc}^{1,n}(\R^n;\R^n)$-regularity.
We wish to find a quantity that corresponds to
``$|\nabla f|^n$ divided by the Jacobian determinant'' in the case of $W^{1,1}_{\loc}$-mappings.
Hence we define the relaxed quantities
\[
K_{f,U}(x,r):=\left(\frac{\diam f(B(x,r)\cap U)^n}{|f(B(x,r))|}\right)^{1/(n-1)}
\quad\textrm{and}\quad
K_f^{\mathrm{fine}}(x):=\inf\limsup_{r\to 0}K_{f,U}(x,r),
\]
where the infimum is taken over \emph{1-finely open} sets $U\ni x$;
we give definitions in Section \ref{sec:prelis}.
In the following analog of \eqref{eq:Kf n minus one}, $f^*$ is the so-called precise representative of $f$.

\begin{theorem}\label{thm:K fine basic}
For every $f\in W_{\loc}^{1,1}(\R^n;\R^n)$, we have
\[
K_{f^*}^{\mathrm{fine}}(x)^{n-1}|\det \nabla f(x)|
\le \frac{2^n}{\omega_n} \Vert \nabla f(x)\Vert^n
\quad \textrm{for a.e. }x\in \R^n,
\]
with the interpretation $\infty\times 0=0$ if $\det \nabla f(x)=0$.
\end{theorem}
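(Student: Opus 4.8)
The plan is to work at a point $x$ that is simultaneously a point of approximate differentiability of $f$ with approximate derivative equal to the Lebesgue value $\nabla f(x)$ of the weak gradient, a Lebesgue point of $\nabla f$ and of $|\det\nabla f|$, and a point of density $1$ of one of the sets $E_k$ in a decomposition $\R^n=N_0\cup\bigcup_k E_k$ --- valid since $f$ is approximately differentiable a.e.\ --- in which $|N_0|=0$, each $E_k$ is measurable, and $f^*$ coincides on $E_k$ with a map $g_k\in C^1(\R^n;\R^n)$. Almost every $x$ has all these properties. If $\det\nabla f(x)=0$, the left-hand side is zero by the stated interpretation, so assume $\det\nabla f(x)\neq0$ and set $L_x(y):=f^*(x)+\nabla f(x)(y-x)$, which maps $B(x,r)$ onto an ellipsoid of volume $|\det\nabla f(x)|\,\omega_n r^n$ and diameter $2\Vert\nabla f(x)\Vert r$.

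First I would bound the denominator from below, showing $\liminf_{r\to0}|f^*(B(x,r))|/(\omega_n r^n)\ge|\det\nabla f(x)|$. Write $E:=E_{k(x)}$ and $g:=g_{k(x)}$, so $f^*=g$ on $E$; since $x\in E$ is a density point of $E$, the map $g$ is differentiable at $x$ with $g(x)=f^*(x)$ and $Dg(x)=\nabla f(x)$, whence $\sup_{|y-x|=r}|g(y)-L_x(y)|=o(r)$. Since $\nabla f(x)$ is invertible, a topological degree argument --- comparing $g$ with $L_x$ on $\partial B(x,r)$ via the straight-line homotopy --- shows that $g(B(x,r))$ contains every point of the ellipsoid $L_x(B(x,r))$ at distance more than $o(r)$ from its boundary, so $|g(B(x,r))|\ge(1-o(1))|\det\nabla f(x)|\,\omega_n r^n$. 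As $g$ is Lipschitz near $x$ and $|B(x,r)\setminus E|=o(r^n)$, also $|g(B(x,r)\setminus E)|=o(r^n)$; combining this with $f^*(B(x,r))\supseteq g(E\cap B(x,r))\supseteq g(B(x,r))\setminus g(B(x,r)\setminus E)$ gives the claimed lower bound.

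Next I would bound the numerator, the key input being the $1$-fine differentiability of the precise representative of a $W^{1,1}_{\loc}$-function: for $1$-quasi-every (hence a.e.) $x$ and every $\eps>0$ there is a $1$-finely open set $U_\eps\ni x$ with $|f^*(y)-L_x(y)|\le\eps|y-x|$ for all $y\in U_\eps$, which we may assume after shrinking $U_\eps$ --- intersecting it with a small Euclidean ball, which remains $1$-finely open. Then for small $r$ and $y,y'\in B(x,r)\cap U_\eps$ the triangle inequality gives $|f^*(y)-f^*(y')|\le 2\eps r+\Vert\nabla f(x)\Vert\,|y-y'|\le 2(\Vert\nabla f(x)\Vert+\eps)r$, so $\diam f^*(B(x,r)\cap U_\eps)\le 2(\Vert\nabla f(x)\Vert+\eps)r$. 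Inserting the two estimates into the definition of $K_{f^*,U_\eps}(x,r)$ yields
\[
\limsup_{r\to0}K_{f^*,U_\eps}(x,r)^{n-1}\le\frac{2^n(\Vert\nabla f(x)\Vert+\eps)^n}{(|\det\nabla f(x)|-\eps)\,\omega_n};
\]
since $U_\eps$ is admissible in the infimum defining $K^{\mathrm{fine}}_{f^*}(x)$, letting $\eps\to0$ and multiplying by $|\det\nabla f(x)|$ gives the theorem.

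I expect the genuine obstacle to be the $1$-fine differentiability used in the last step: approximate differentiability only yields a good set of density $1$ at $x$, whereas here the bad set $\{y:|f^*(y)-L_x(y)|>\eps|y-x|\}$ must be $1$-thin at $x$, which is strictly stronger. This should be extracted either from a fine Lebesgue-point theorem for $\nabla f$ together with a fine Poincar\'e-type inequality, or quoted from the fine potential theory of $W^{1,1}$-functions developed in the preliminaries. The degree argument and the bookkeeping making the decomposition $\R^n=N_0\cup\bigcup_k E_k$ compatible with the quasi-everywhere defined representative $f^*$ are comparatively routine, though they must be carried out with care.
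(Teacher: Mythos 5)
Your proposal is correct, but it reaches the crucial volume estimate by a genuinely different route than the paper. Both arguments use the fine differentiability of $f^*$ (Theorem \ref{thm:fine diff}, quoted from \cite{LahGFQ}) as the black-box input giving $\diam f^*(B(x,r)\cap U)\le 2(\Vert\nabla f(x)\Vert+\eps)r$; the ``genuine obstacle'' you flag at the end is thus exactly the ingredient the paper also imports rather than reproves, and your use of it (shrinking $U$ by intersecting with a small ball, which stays $1$-finely open) is fine. The difference lies in the lower bound $\liminf_{r\to0}|f^*(B(x,r))|/(\omega_n r^n)\ge|\det\nabla f(x)|$: the paper rescales $f$ around $x$, shows via a Semmes pencil-of-curves and maximal-function argument (Lemmas \ref{lem:Semmes} and \ref{lem:injective}) that the blow-ups are injective outside a set of small measure, replaces them by Lipschitz maps off another small set, and applies the area formula, the injectivity being precisely what turns the area formula into a lower bound for the measure of the image; you instead invoke the Lusin-type $C^1$ decomposition of an a.e.\ approximately differentiable map, transfer the problem to the $C^1$ map $g$ agreeing with $f^*$ on a set $E$ of density $1$ at $x$ (with $Dg(x)=\nabla f(x)$ by uniqueness of the approximate derivative), and use topological degree to show that $g(B(x,r))$ covers the ellipsoid $L_x(B(x,r))$ up to an $o(r^n)$ boundary layer, discarding $g(B(x,r)\setminus E)$, of measure $o(r^n)$ by the local Lipschitz bound. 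Both are sound: your degree argument is the classical one for continuous maps at points of differentiability with invertible derivative, adapted to the merely a.e.-defined $f^*$ through $g$, and it bypasses the paper's Semmes-curve injectivity machinery (which the paper uses only for this theorem), at the price of invoking degree theory and the $C^1$ Lusin approximation in place of Lipschitz truncation plus the area formula. Two minor remarks: the right-hand side of the theorem should read $\frac{2^n}{\omega_n}\Vert\nabla f(x)\Vert^n$, as in \eqref{eq:Kf n minus one} and in the paper's own proof (the exponent is missing in the statement), and this is what your computation yields; and in your final display the denominator should simply be $\omega_n|\det\nabla f(x)|$, since your lower bound produces a multiplicative $(1-o(1))$ rather than an additive $\eps$ --- a cosmetic point only.
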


This shows that $K_{f}^{\mathrm{fine}}$ is generally much smaller than $K_f$.
On the other hand, the mapping we give in the aforementioned Example \ref{ex:W11 function}
is by no means a homeomorphism.
Thus one can ask: for a homeomorphism $f$,
are conditions on $K_f^{\mathrm{fine}}$ enough to prove Sobolev regularity,
or even quasiconformality?
Our main result is the following analog on the plane of the aforementioned Koskela--Rogovin \cite{KoRo},
and of other similar results.

\begin{theorem}\label{thm:main}
	Let $f\colon \R^2\to \R^2$ be a homeomorphism. Let $1\le p\le 2$.
	Suppose $K_f^{\mathrm{fine}}\in L^{p^*/2}_{\loc}(\R^2)$
	and $K_f^{\mathrm{fine}}<\infty$ outside a set $E$ of
	$\sigma$-finite $\mathcal H^{1}$-measure. Then $f\in W_{\loc}^{1,p}(\R^2;\R^2)$,
	and in the case $p=2$  we obtain that $f$ is quasiconformal and that
	$K_f^{\mathrm{fine}}(x)=K_f(x)$ for a.e. $x\in\R^2$. 
\end{theorem}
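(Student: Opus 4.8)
The plan is to follow the established strategy for upgrading pointwise infinitesimal conditions to Sobolev regularity, but with the finely-open set $U$ as an extra degree of freedom that must be controlled. First I would fix a ball $B$ and recall that, since $f$ is a homeomorphism, $f(B(x,r))$ is open and $|f(B(x,r))|>0$ for every $x$ and every $r>0$; moreover $r\mapsto |f(B(x,r))|$ is nondecreasing, so by the Lebesgue differentiation / Vitali-type argument the ``volume derivative'' $\mu_f(x):=\limsup_{r\to 0}|f(B(x,r))|/(\omega_n r^n)$ is finite for a.e.\ $x$ and $\int_B \mu_f\,dx\le |f(B)|<\infty$. The definition of $K_f^{\mathrm{fine}}$ then yields, for a.e.\ $x\notin E$, a $1$-finely open set $U\ni x$ and a radius $r_x>0$ such that
\[
\diam f(B(x,r)\cap U)\le C\,\bigl(K_f^{\mathrm{fine}}(x)+1\bigr)^{(n-1)/n}\,|f(B(x,r))|^{1/n}
\quad\text{for }0<r<r_x .
\]
The point of the fine openness is that $x$ is a point of $1$-fine density $1$ of $U$, hence (using a Wiener-type / capacity-density estimate) $B(x,r)\cap U$ is ``almost all'' of $B(x,r)$ in a capacitary sense as $r\to 0$; combined with the continuity of $f$ this should let me replace $\diam f(B(x,r)\cap U)$ by something comparable to $\diam f(B(x,r))$ \emph{along a sequence of radii}, or at least control the oscillation of $f$ on most of $B(x,r)$.

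Next I would set up the Sobolev estimate. Define $g(x):=\limsup_{r\to 0}\osc_{B(x,r)\cap U_x} f /r$ (an ``upper gradient along $U_x$''); the displayed inequality combined with the volume bound gives, for a.e.\ $x\notin E$,
\[
g(x)^n\le C\,\bigl(K_f^{\mathrm{fine}}(x)+1\bigr)^{n-1}\,\mu_f(x).
\]
Since $E$ has $\sigma$-finite $\mathcal H^{n-1}$-measure, it is negligible for the $p$-modulus of curve families when $p\le n$ (this is the standard reason such exceptional sets are allowed), so $g$ serves as a $p$-weak upper gradient of $f$ on $B$. Then I invoke the characterization of Newtonian/Sobolev maps via upper gradients: $f\in W^{1,p}_{\loc}$ provided $g\in L^p_{\loc}$. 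To check $g\in L^p$, apply Hölder with exponents $p^*/p$... more precisely, by the hypothesis $K_f^{\mathrm{fine}}\in L^{p^*(n-1)/n}_{\loc}$ and $\mu_f\in L^1_{\loc}$, Hölder's inequality with the conjugate pair matching $p/n+ \tfrac{p}{p^*}\cdot\tfrac{n}{n-1}\cdot\tfrac{n-1}{n}=1$ (which is exactly how the exponent $p^*(n-1)/n$ is engineered) gives $\int_B g^p\le C\,\|K_f^{\mathrm{fine}}+1\|_{L^{p^*(n-1)/n}(B')}^{(n-1)p/n}\,|f(B')|^{p/n}<\infty$. This yields $f\in W^{1,p}_{\loc}(\R^n;\R^n)$.

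For the case $p=n$, once $f\in W^{1,n}_{\loc}$ one has the classical Sobolev-then-quasiconformal machinery available: $f$ is differentiable a.e., satisfies Lusin's condition $(N)$ (since $f\in W^{1,n}$ is in the right borderline class and is a homeomorphism), and the change of variables formula holds. The pointwise inequality just proved, applied at a.e.\ point of differentiability, reads $\|\nabla f(x)\|^n\le C\,K_f^{\mathrm{fine}}(x)^{n-1}|\det\nabla f(x)|$, so the linear distortion is bounded by $K_f^{\mathrm{fine}}\in L^n_{\loc}$, i.e.\ $f$ is quasiconformal with locally $L^n$-integrable distortion; and since $K_f^{\mathrm{fine}}<\infty$ a.e.\ and a homeomorphism of finite distortion in $W^{1,n}_{\loc}$ is in fact quasiconformal in the classical (bounded-distortion) sense by the usual argument, one concludes $K_f<\infty$ a.e. Finally, for the identity $K_f^{\mathrm{fine}}=K_f$ a.e.: the inequality $K_f^{\mathrm{fine}}\le K_f$ is immediate from the definition (take $U=\R^n$), while for a quasiconformal $f$ the reverse follows by comparing both sides with $\tfrac{2^n}{\omega_n}\|\nabla f(x)\|^n/|\det\nabla f(x)|$ via \eqref{eq:Kf n minus one} and Theorem~\ref{thm:K fine basic}, since for an honest quasiconformal map $\diam f(B(x,r)\cap U)$ and $\diam f(B(x,r))$ are comparable for fine-density-$1$ sets $U$ as $r\to 0$.

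The main obstacle I anticipate is the first step: transferring information from $\diam f(B(x,r)\cap U)$ over a \emph{finely} open $U$ to genuine infinitesimal control of $f$ on balls. Ordinary density-$1$ points are not enough because $f$ need not be continuous off a small set a priori (here it is continuous, being a homeomorphism, which helps), but the delicate issue is that a $1$-finely open set can be very thin in the measure-theoretic sense while still being fine-dense, so one must use the $1$-fine topology's link to Sobolev capacity — essentially a Wiener criterion — to guarantee that $B(x,r)\setminus U$ is capacitarily small enough that it cannot carry the oscillation of $f$. Making this quantitative, uniformly enough to feed into the modulus/upper-gradient estimate and to handle the exceptional set $E$ simultaneously, is where the real work lies; the rest is a careful but standard bookkeeping of exponents and an appeal to known Sobolev-to-quasiconformal results.
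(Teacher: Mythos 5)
Your overall architecture (pointwise bound $g(x)^n\le C(K_f^{\mathrm{fine}}(x)+1)^{n-1}\mu_f(x)$ plus H\"older with the exponent $p^*(n-1)/n$, then classical Sobolev-to-quasiconformal machinery for $p=n$) matches the paper's bootstrapping stage, but the central step --- turning the fine-oscillation quantity into genuine Sobolev control --- is exactly where your plan has a gap, and the two devices you propose for it do not work as stated. First, the suggestion that $1$-fine density of $U$ at $x$, together with continuity of $f$, lets you replace $\diam f(B(x,r)\cap U)$ by something comparable to $\diam f(B(x,r))$ cannot be right as an a priori step: a capacitarily thin complement $B(x,r)\setminus U$ can still carry essentially all of the oscillation of a merely continuous homeomorphism at every fixed scale, and indeed the whole point of $K_f^{\mathrm{fine}}$ is that it can be much smaller than $K_f$ (the identity $K_f=K_f^{\mathrm{fine}}$ a.e.\ is a \emph{conclusion} of the theorem for $p=n$, available only after quasiconformality, via the a.e.\ differentiability and $J_f>0$). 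Second, the claim that a set $E$ of $\sigma$-finite $\mathcal H^{n-1}$-measure is ``negligible for the $p$-modulus of curve families'' is false: already a piece of a hyperplane has finite $\mathcal H^{n-1}$-measure while the family of curves meeting it has positive $p$-modulus for $p\le n$. The usable fact, and the one the paper exploits, is that for a.e.\ line $L$ parallel to a fixed direction the intersection $L\cap E$ is countable, hence $\mathcal H^1(f(L\cap E))=0$; so your ``$g$ serves as a $p$-weak upper gradient'' assertion is unsupported, and with it the whole Sobolev estimate.

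What actually closes this gap in the paper is an iterated covering construction rather than an upper-gradient/modulus argument: one covers the good sets $A_j$ (where the fine diameter bound holds uniformly at scales $\le 1/j$) by Whitney balls, replaces each ball by its intersection $U_{j,k,l}$ with the corresponding finely open set, and iterates on the leftover ``bad'' set, whose capacity is forced to decay geometrically (estimate \eqref{eq:estimate list}); the residual set $N_j$ is shown to be purely $\mathcal H^{n-1}$-unrectifiable via Lemma \ref{lem:unrectifiable}, so by Federer's theorem a.e.\ line misses it. A.e.\ line is then covered by $E\cup\bigcup_{k,l}U_{j,k,l}$, which yields the chaining inequality $|f(\gamma(0))-f(\gamma(\ell))|\le\liminf_j\int_\gamma g_j\,ds$ for the explicit Riesz-type functions $g_j$, and Young's inequality with scale-dependent weights plus Fubini bounds the pointwise variation in each coordinate direction, giving $D^1$-regularity with $|Df|\ll\mathcal L^n$; only then does Proposition \ref{prop:homeo} give the pointwise identity that feeds your H\"older computation for general $p$. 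Two smaller corrections for your $p=n$ endgame: the hypothesis there gives $K_f^{\mathrm{fine}}$ locally \emph{bounded} (since $p^*=\infty$), not merely $L^n_{\loc}$, and you need this, because a $W^{1,n}_{\loc}$ homeomorphism of finite (or even $L^n$-integrable) distortion need not be quasiconformal; quasiconformality comes from the uniform bound $\Vert\nabla f\Vert^n\le\frac{\omega_n}{2^n}\Vert K_f^{\mathrm{fine}}\Vert_\infty^{n-1}J_f$ a.e., and the identity $K_f=K_f^{\mathrm{fine}}$ a.e.\ then follows from Proposition \ref{prop:homeo} together with $J_f>0$ a.e.\ (quasiconformality of $f^{-1}$), not from a direct comparison of diameters on fine-density sets.
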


Here $p^*=2p/(2-p)$ when $1\le p<2$, and $p^*=\infty$ when $p=2$.
In the case $1\le p<2$, this theorem can be viewed as a statement about
``finely quasiconformal'' mappings of low regularity.
In the case $p=2$, we get the following corollary saying that ``finely quasiconformal'' mappings
are in fact quasiconformal.

\begin{corollary}\label{cor:main}
		Let $f\colon \R^2\to \R^2$ be a homeomorphism and suppose that
		$K_f^{\mathrm{fine}}(x)\le K<\infty$ for every $x\in \R^2$. Then
	$f$ is quasiconformal.
\end{corollary}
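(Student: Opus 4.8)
The plan is to obtain Corollary~\ref{cor:main} as the endpoint case $p=n$ of Theorem~\ref{thm:main}, so that essentially no new argument is needed beyond checking that the hypotheses of that theorem hold under the stated assumption.

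First I would unwind the exponents in the case $p=n$: here $p^*=\infty$, hence $p^*(n-1)/n=\infty$, and the integrability requirement $K_f^{\mathrm{fine}}\in L^{p^*(n-1)/n}_{\loc}(\R^n)$ reduces to $K_f^{\mathrm{fine}}\in L^\infty_{\loc}(\R^n)$. The standing hypothesis $K_f^{\mathrm{fine}}(x)\le K<\infty$ for every $x\in\R^n$ gives exactly this; in fact $K_f^{\mathrm{fine}}\in L^\infty(\R^n)$, granting the measurability of $K_f^{\mathrm{fine}}$ that is part of the setup from Section~\ref{sec:prelis}. Second, the same pointwise bound shows $K_f^{\mathrm{fine}}(x)<\infty$ for \emph{every} $x\in\R^n$, so the exceptional set $E=\{x\in\R^n:\ K_f^{\mathrm{fine}}(x)=\infty\}$ appearing in Theorem~\ref{thm:main} is empty, hence trivially of $\sigma$-finite $\mathcal H^{n-1}$-measure.

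With both hypotheses verified, Theorem~\ref{thm:main} with $p=n$ applies and yields directly that $f\in W_{\loc}^{1,n}(\R^n;\R^n)$ and that $f$ is quasiconformal; it additionally gives $K_f^{\mathrm{fine}}(x)=K_f(x)$ for a.e.\ $x\in\R^n$, so in particular $K_f(x)\le K$ for a.e.\ $x$, which quantifies the dilatation of $f$ in terms of the assumed bound. Since all the substance is already contained in Theorem~\ref{thm:main}, the proof of the corollary presents no genuine obstacle; the only point requiring care is the endpoint bookkeeping ($p^*=\infty$ and $p^*(n-1)/n=\infty$) and checking that this is consistent with the way $L^\infty_{\loc}$-control of $K_f^{\mathrm{fine}}$ is actually invoked in the proof of Theorem~\ref{thm:main}.
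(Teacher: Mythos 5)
Your proposal is correct and follows essentially the same route as the paper, whose entire proof of the corollary is the observation that it follows immediately from Theorem~\ref{thm:main} with $p=n$ (empty exceptional set, bounded $K_f^{\mathrm{fine}}$). One minor caution: a pointwise bound $K_f^{\mathrm{fine}}\le K$ does not by itself yield measurability (so it does not literally ``grant'' $L^\infty$ membership); the clean way to handle this, consistent with the paper, is to invoke the more general Theorem~\ref{thm:main text}, which only requires a measurable majorant $h\ge K_f^{\mathrm{fine}}$, and take $h\equiv K$.
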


\section{Preliminaries}\label{sec:prelis}

Our definitions and notation are standard, and
the reader may consult e.g. the monograph Evans--Gariepy \cite{EvGa} for more background.
We will work in the Euclidean space $\R^n$ with $n\ge 2$.
We denote the $n$-dimensional Lebesgue outer measure by $\mathcal L^n$.
We denote the $s$-dimensional Hausdorff content by $\mathcal H_{R}^{s}$
and the Hausdorff measure by $\mathcal H^{s}$, with $0<R\le \infty$ and $0\le s\le n$.
If a property holds outside a set of Lebesgue measure zero, we say that it holds almost everywhere,
or ``a.e.''.

We denote the characteristic function of a set $A\subset\R^n$ by $\ch_A\colon \R^n\to \{0,1\}$.
We denote by $|v|$ the Euclidean norm of $v\in \R^n$,
and we also write $|A|:=\mathcal L^n(A)$ for a set $A\subset \R^n$.
We write $B(x,r)$ for an open ball in $\R^n$ with center $x\in \R^n$
and radius $r>0$, that is, $B(x,r)=\{y \in \R^n \colon |y-x|<r\}$.
We will sometimes use the notation $2B(x,r):=B(x,2r)$.
For matrices $A\in \R^{n\times n}$, we consider the Euclidean norm $|A|$ as well as the maximum norm
\[
\Vert A\Vert:=\max_{v\in\R^n,\,|v|=1}|Av|.
\]

By ``measurable'' we mean $\mathcal L^n$-measurable, unless otherwise specified.
If a function $u$ is in $L^1(D)$ for some measurable set $D \subset \R^n$ of nonzero and finite Lebesgue
measure, we write
\[
u_D:=\vint{D} u(y) \,d\mathcal L^n(y) \coloneqq \frac{1}{\mathcal L^n(D)} \int_D u(y) \,d\mathcal L^n(y)
\] 
for its mean value in $D$.

We will always denote by $\Om\subset\R^n$ an open set, and we consider $1\le p <\infty$.
Let $l\in\N$.
The Sobolev space $W^{1,p}(\Om;\R^l)$ consists of mappings $f\in L^p(\Om;\R^l)$
whose first weak partial derivatives $\partial f_j/\partial x_k$,
$j=1,\ldots,l$, $k=1,\ldots,n$, belong to $L^p(\Om)$.
We will only consider $l=1$ or $l=n$.
The weak partial derivatives form the matrix $(\nabla f)_{jk}$.
The Dirichlet space $D^{p}(\Om;\R^l)$ is defined in the same way, except that the integrability requirement
for the mapping itself is relaxed to $f\in L_{\loc}^1(\Om;\R^l)$.
The Sobolev norm is
\[
\Vert f\Vert_{W^{1,p}(\Om;\R^l)}
:=\Vert f\Vert_{L^p(\Om;\R^l)}+\Vert \nabla f\Vert_{L^p(\Om;\R^{l\times n})},
\]
where the $L^p$ norms are defined with respect to the Euclidean norm.

Consider a homeomorphism $f\colon \Om\to \Om'$, with $\Om,\Om'\subset \R^n$ open.
In addition to the Jacobian determinant $\det \nabla f(x)$, we also define the Jacobian
\begin{equation}\label{eq:Jacobian}
	J_f(x):=\limsup_{r\to 0}\frac{|f(B(x,r))|}{|B(x,r)|},\quad x\in\Om.
\end{equation}
Note that $J_f$ is the density of the pullback measure
\[
f_{\#}\mathcal L^n(A):=\mathcal L^n(f(A)),\quad \textrm{for Borel }A\subset \Om.
\]
By well-known results on densities, see e.g. \cite[p. 42]{EvGa}, we know the following:
$J_f(x)$ exists as a limit for a.e. $x\in\Om$, is a Borel function, and
\begin{equation}\label{eq:Jacobian basic}
	\int_{\Om}J_f\,d\mathcal L^n\le |f(\Om)|.
\end{equation}
Equality holds if $f$ is absolutely continuous in measure, that is,
if $|A|=0$ implies $|f(A)|=0$.
We will use the following ``analytic'' definition of quasiconformality.
For the equivalence of different definitions of quasiconformality, including the metric definition
used in the introduction, see e.g. \cite[Theorem 9.8]{HKST}.

\begin{definition}\label{def:quasiconformal}
Let $\Om,\Om'\subset \R^n$ be open sets.
A homeomorphism $f\in W_{\loc}^{1,n}(\Om;\Om')$ is said to be quasiconformal if
\begin{equation}\label{eq:Df and J 2}
	\Vert \nabla f(x)\Vert_{\max}^n\le K|\det \nabla f(x)|
	\quad\textrm{for a.e. }x\in \Om,
\end{equation}
for some constant $K<\infty$.
\end{definition}

Here we understand $\nabla f$ to be the weak gradient. However,
as a homeomorphism, $f$ is locally \emph{monotone},
and combining this with the fact that $f\in W_{\loc}^{1,n}(\Om;\Om')$, by e.g. Mal\'y
\cite[Theorem 3.3, Theorem 4.3]{Mal} we know that
$f$ is differentiable a.e.
Moreover, by \cite[Corollary B]{MaMa} and \cite[Theorem 3.4]{Mal}, such $f$
is absolutely continuous in measure and satisfies the area formula, implying that
\[
\int_W J_f\,d\mathcal L^n=|f(W)|
=\int_W |\det \nabla f|\,d\mathcal L^n
\]
for every open $W\subset \Om$, and so
$|\det \nabla f|=J_f$ a.e. in $\Om$. Thus in \eqref{eq:Df and J 2} we could equivalently
replace $|\det \nabla f|$ with $J_f$.

We will need the following Vitali-Carath\'eodory theorem;
for a proof see e.g. \cite[p. 108]{HKSTbook}.

\begin{theorem}\label{thm:VitaliCar}
	Let $\Om\subset \R^n$ be open and let
	$h\in L^1(\Om)$ be nonnegative. Then there exists a sequence $\{h_i\}_{i=1}^{\infty}$
	of lower semicontinuous functions
	on $\Om$ such that $h\le h_{i+1}\le h_i$ for all $i\in\N$, and
	$h_i\to h$ in $L^1(\Om)$.
\end{theorem}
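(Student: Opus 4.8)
The plan is to reduce everything to the ``single error bound'' version of the Vitali--Carath\'eodory theorem and then pass to a monotone sequence by taking finite minima. Three elementary observations about lower semicontinuity will be used repeatedly: (i) if $V\subset\Om$ is open, then $\ch_V$ is lower semicontinuous; (ii) if $c_k\ge 0$ and each $V_k\subset\Om$ is open, then $\sum_{k=1}^\infty c_k\ch_{V_k}$ is lower semicontinuous, being the pointwise increasing supremum of its partial sums, each of which is a finite sum of nonnegative multiples of functions as in (i); and (iii) the minimum of finitely many lower semicontinuous functions is lower semicontinuous, since $\{x:\min(g,g')(x)>a\}=\{g>a\}\cap\{g'>a\}$ is open for every $a\in\R$.

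First I would establish the following: for every nonnegative $h\in L^1(\Om)$ and every $\eps>0$ there is a lower semicontinuous function $g$ on $\Om$ with $h\le g$ and $\int_\Om(g-h)\,d\mathcal L^n<\eps$. Approximating $h$ from below by an increasing sequence of nonnegative simple functions and telescoping, I would write $h=\sum_{k=1}^\infty c_k\ch_{A_k}$ pointwise on $\Om$, where $c_k>0$, the sets $A_k\subset\Om$ are measurable, and $\sum_{k=1}^\infty c_k|A_k|=\int_\Om h\,d\mathcal L^n<\infty$; in particular each $|A_k|<\infty$. By outer regularity of Lebesgue measure, for each $k$ I can pick an open set $U_k\supset A_k$ in $\R^n$ with $c_k|U_k\setminus A_k|<\eps\,2^{-k}$, and then set $V_k:=U_k\cap\Om$, so that $A_k\subset V_k\subset\Om$ is open with $c_k(|V_k|-|A_k|)<\eps\,2^{-k}$. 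Put $g:=\sum_{k=1}^\infty c_k\ch_{V_k}$. By (ii) this $g$ is lower semicontinuous, clearly $g\ge h$, and
\[
\int_\Om (g-h)\,d\mathcal L^n=\sum_{k=1}^\infty c_k\bigl(|V_k|-|A_k|\bigr)<\eps.
\]
In particular $\int_\Om g\,d\mathcal L^n<\int_\Om h\,d\mathcal L^n+\eps<\infty$, so $g\in L^1(\Om)$ and $g<\infty$ a.e.

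Then I would apply this with $\eps=1/i$ for each $i\in\N$, obtaining lower semicontinuous functions $g_i$ on $\Om$ with $h\le g_i$ and $\int_\Om(g_i-h)\,d\mathcal L^n<1/i$, and set $h_i:=\min\{g_1,\dots,g_i\}$. By (iii) each $h_i$ is lower semicontinuous; since $h\le g_j$ for every $j$ we have $h\le h_{i+1}\le h_i$ for all $i$; and since $h\le h_i\le g_i$,
\[
\int_\Om|h_i-h|\,d\mathcal L^n=\int_\Om (h_i-h)\,d\mathcal L^n\le\int_\Om(g_i-h)\,d\mathcal L^n<\frac{1}{i}\longrightarrow 0
\]
as $i\to\infty$, so $h_i\to h$ in $L^1(\Om)$. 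This produces the claimed sequence.

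There is no substantial obstacle in this argument; the only places that call for a little care are verifying that the infinite sum $\sum_k c_k\ch_{V_k}$ is still lower semicontinuous (which is why one writes it as a supremum of partial sums rather than a generic pointwise limit) and that it is finite almost everywhere (ensured by the $L^1$ bound $\int_\Om g\,d\mathcal L^n\le\int_\Om h\,d\mathcal L^n+\eps$), together with the routine checks that the finite-minimum construction yields a decreasing sequence that still dominates $h$.
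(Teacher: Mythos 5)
Your proof is correct. The paper does not prove this statement itself but only cites \cite[p.~108]{HKSTbook}, and your argument is the standard one for the Vitali--Carath\'eodory theorem in the nonnegative $L^1$ case: decompose $h=\sum_k c_k\ch_{A_k}$, enlarge each $A_k$ to an open $V_k\subset\Om$ by outer regularity to obtain a lower semicontinuous majorant with arbitrarily small excess integral, and then pass to finite minima to get the decreasing sequence; all the delicate points (lower semicontinuity of the infinite sum as an increasing supremum of partial sums, $L^1$-finiteness, and the finite-minimum step) are handled correctly.
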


The theory of $\BV$ mappings that we present next can be found in the monograph
Ambrosio--Fusco--Pallara \cite{AFP}.
As before, let $\Om\subset\R^n$ be an open set. Let $l\in\N$.
A mapping
$f\in L^1(\Omega;\R^l)$ is of bounded variation,
denoted $f\in \BV(\Omega;\R^l)$, if its weak derivative
is an $\R^{l\times n}$-valued Radon measure with finite total variation. This means that
there exists a (unique) Radon measure $Df$
such that for all $\varphi\in C_c^1(\Omega)$, the integration-by-parts formula
\[
\int_{\Omega}f_j\frac{\partial\varphi}{\partial x_k}\,d\mathcal L^n
=-\int_{\Omega}\varphi\,d(Df_j)_k,\quad j=1,\ldots,l,\ k=1,\ldots,n,
\]
holds.
The total variation of $Df$ is denoted by $|Df|$.
The BV norm is defined by
\[
\Vert f\Vert_{\BV(\Om)}:=\Vert f\Vert_{L^1(\Om)}+|Df|(\Om).
\]
If we do not know a priori that a mapping $f\in L^1_{\loc}(\Om;\R^l)$
is a BV mapping, we consider
\begin{equation}\label{eq:definition of total variation}
\Var(f,\Om):=\sup\left\{\sum_{j=1}^{l}\int_{\Om}f_j\dive\varphi_j\,d\mathcal L^n,\,\varphi\in C_c^{1}(\Om;\R^{l\times n}),
\,|\varphi|\le 1\right\}.
\end{equation}
If $\Var(f,\Om)<\infty$, then the $\R^{l\times n}$-valued Radon measure $Df$
exists and $\Var(f,\Om)=|Df|(\Om)$
by the Riesz representation theorem, and $f\in\BV(\Om)$ provided that $f\in L^1(\Om;\R^l)$.
If $E\subset\R^n$ with $\Var(\ch_E,\R^n)<\infty$, we say that $E$ is a set of finite perimeter.

The coarea formula states that for a function $u\in \BV(\Om)$, we have
\begin{equation}\label{eq:coarea}
	|Du|(\Om)=\int_{-\infty}^{\infty}|D\ch_{\{u>t\}}|(\Om)\,dt.
\end{equation}
Here we abbreviate $\{u>t\}:=\{x\in \Om\colon u(x)>t\}$.

The relative isoperimetric inequality states that for every set
of finite perimeter $E\subset \R^n$ and every ball
$B(x,r)$, we have
\begin{equation}\label{eq:relative isoperimetric inequality}
	\min\{\mathcal L^n(B(x,r)\cap E),\mathcal L^n(B(x,r)\setminus E)\}
	\le C_I r|D\ch_E|(B(x,r)),
\end{equation}
where the constant $C_I\ge 1$ only depends on $n$.
The following relative isoperimetric inequality holds on the plane:
for every set of finite perimeter $E\subset \R^2$ and every disk
$B(x,r)$, we have
\begin{equation}\label{eq:rel isop ineq}
	\min\{\mathcal L^2(B(x,r)\cap E),\mathcal L^2(B(x,r)\setminus E)\}
	\le r |D\ch_E|(B(x,r)).
\end{equation}

For $f\in L^1_{\loc}(\Om)$,
we define the precise representative by
\begin{equation}\label{eq:precise representative}
f^*(x):=\limsup_{r\to 0}\vint{B(x,r)}f\,d\mathcal L^n,\quad x\in \Om.
\end{equation}
For $f\in L^1_{\loc}(\Om;\R^n)$, we let
$f^*(x):=(f_1^*(x),\ldots,f_n^*(x))$.

For basic results in the one-dimensional case $n=1$, see \cite[Section 3.2]{AFP}.
If $\Om\subset \R$ is an open interval, we define the pointwise variation of $f\colon \Om\to \R^n$ by
\begin{equation}\label{eq:pointwise variation}
\pV(f,\Om):=\sup \sum_{j=1}^{N-1} |f(x_{j})-f(x_{j+1})|,
\end{equation}
where the supremum is taken over all collections of points $x_1<\ldots<x_N$
in $\Om$. For a general open $\Om\subset \R$, we define $\pV(f,\Om)$ to be $\sum \pV(f,I)$, where the sum
runs over all connected components $I$ of $\Om$.
For every pointwise defined $f\in L^1_{\loc}(\Om;\R^n)$, we have $\Var(f,\Om)\le \pV(f,\Om)$.

Denote by $\pi_n\colon\R^n\to \R^{n-1}$ the orthogonal projection onto $\R^{n-1}$:
for $x=(x_1,\ldots,x_n)\in\R^n$,
\begin{equation}\label{eq:orthogonal}
\pi_n((x_1,\ldots,x_n)):=(x_1,\ldots,x_{n-1}).
\end{equation}
For $z\in\pi_n(\Om)$, we denote the slices of an open set $\Om\subset\R^n$ by
\[
\Om_z\coloneqq \{t\in\R\colon (z,t) \in \Om\}.
\]
We also denote $f_z(t)\coloneqq f(z,t)$ for $z\in\pi_n(\Om)$ and $t\in \Om_z$.
For any continuous $f\in L^1_{\loc}(\Om;\R^n)$, we know that $\Var(f,\Om)$ is at most the sum of
\begin{equation}\label{eq:pointwise variation in a direction}
\int_{\pi_n(\Om)}\pV(f_z,\Om_z)\,d\mathcal L^{n-1}(z)
\end{equation}
and the analogous quantities for the other $n-1$ coordinate directions, see \cite[Theorem 3.103]{AFP}.

The (Sobolev) $1$-capacity of a set $A\subset \R^n$ is defined by
\[
\capa_1(A):=\inf \Vert u\Vert_{W^{1,1}(\R^n)},
\]
where the infimum is taken over Sobolev functions $u\in W^{1,1}(\R^n)$ satisfying
$u\ge 1$ in a neighborhood of $A$.

Given sets $A\subset W\subset \R^n$, where $W$ is open, the relative $p$-capacity is defined by
\[
\rcapa_1(A,W):=\inf \int_{W}|\nabla u|\,d\mathcal L^n,
\]
where the infimum is taken over functions $u\in W_0^{1,1}(W)$ satisfying $u\ge 1$ in a neighborhood
of $A$.
The class $W_0^{1,1}(W)$ is the closure of $C^1_c(W)$ in the $W^{1,p}(\R^n)$-norm.

By \cite[Theorem 3.3]{CDLP}, given a function $u\in\BV(\Om)$, there is a sequence $\{u_j\}_{j=1}^{\infty}$
of functions in $W^{1,1}(\Om)$ such that
\begin{equation}\label{eq:CDLP approx}
	u_j\to u \ \textrm{ in }L^1(\Om),\quad |Du_j|(\Om)\to |Du|(\Om),\
	\textrm{ and }\ u^{\vee}_j(x)\ge u^{\vee}(x)\ \textrm{ for }\mathcal H^{n-1}\textrm{-a.e. }x\in\Om.
\end{equation}

If $B(x,r)$ is a ball with $0<r\le 1$, and $F$ is a measurable set with
$\mathcal L^n(F\cap B(x,r))\le \tfrac 12 \mathcal L^n(B(x,r))$ and $|D\ch_F|(B(x,r))<\infty$,
then by combining e.g. Theorem 5.6  and Theorem 5.15(iii) of \cite{EvGa}, we get
\[
|D\ch_{B(x,r)\cap F}|(\R^n)\le C \Vert \ch_F\Vert_{\BV(B(x,r))}
\]
for some constant $C$ depending only on $n,r$. On the other hand,
by the relative isoperimetric inequality \eqref{eq:relative isoperimetric inequality}, we have
\begin{align*}
	\Vert \ch_F\Vert_{\BV(B(x,r))}
	= \mathcal L^n(F\cap B(x,r))+|D\ch_F|(B(x,r))
	&\le (C_I r+1)|D\ch_F|(B(x,r))\\
	&\le 2C_I |D\ch_F|(B(x,r)),
\end{align*}
since $r\le 1$ and $C_I\ge 1$.
Combining these, we get
\begin{equation}\label{eq:constant C depending only on n}
	|D\ch_{B(x,r)\cap F}|(\R^n)\le C|D\ch_F|(B(x,r)),
\end{equation}
and by a scaling argument we see that in fact $C$ only depends on $n$, not on $r$.

By \cite[Proposition 6.16]{BB}, we know that for a ball $B(x,r)$ and $A\subset B(x,r)$, we have
\begin{equation}\label{eq:capa vs rcapa}
	\frac{\capa_1(A)}{C'(1+r)}
	\le \rcapa_1(A,B(x,2r)),
\end{equation}
where $C'$ is a constant depending only on $n$.

We denote $\omega_{n}:=|B(0,1)|$.

\begin{lemma}\label{lem:capa in small ball}
	Suppose $x\in\R^n$, $0<r<1$, and $A\subset B(x,r)$. Then we have
	\[
	\frac{\mathcal L^n(A)}{\mathcal L^n(B(x,r))}\le \frac{2C_I}{\omega_n}\frac{\capa_1(A)}{r^{n-1}}
	\quad\textrm{and}\quad
	\rcapa_1(A,B(x,2r))\le C\capa_1(A),
	\]
	where $C_I$ is the constant in the relative isoperimetric inequality \eqref{eq:relative isoperimetric inequality},
	and $C$ is a constant depending only on $n$.
\end{lemma}

\begin{proof}
	For both inequalities, we can assume that $\capa_1(A)<\infty$.
	Let $\eps>0$.
	We can choose a function $u\in W^{1,1}(\R^n)$ such that $u\ge 1$ in a neighborhood of $A$,
	and
	\[
	\Vert  u\Vert_{W^{1,1}(\R^n)}<\capa_1(A)+\eps.
	\] 
	By the coarea formula \eqref{eq:coarea}, we then find $0<t<1$ such that
	$\{u>t\}$ contains a neighborhood of $A$, and
	\[
	| D\ch_{\{u>t\}}|(\R^n)\le |Du|(\R^n)\le 	\Vert  u\Vert_{W^{1,1}(\R^n)}<\capa_1(A)+\eps. 
	\]
	Denote $F:=\{u>t\}$.
	
	Case 1: Suppose $\mathcal L^n(F\cap B(x,r))\ge \tfrac 12 \mathcal L^n(B(x,r))$.
	We find $R\ge r$ such that $\mathcal L^n(F\cap B(x,R))= \tfrac 12 \mathcal L^n(B(x,R))$.
	By the relative isoperimetric inequality \eqref{eq:relative isoperimetric inequality}, we have
	\begin{equation}\label{eq:perimeter estimate}
		\begin{split}
			\capa_1(A)+\eps
			> |D\ch_F|(\R^n)
			\ge |D\ch_F|(B(x,R))
			&\ge C_I^{-1} \frac 12 R^{-1} \mathcal L^n(B(x,R))\\
			&=\frac{\omega_n}{2C_I} R^{n-1}\\
			&\ge\frac{\omega_n}{2C_I} r^{n-1}\\
			&\ge \frac{\omega_n}{2C_I} r^{n-1}\frac{\mathcal L^n(F\cap B(x,r))}{\mathcal L^n(B(x,r))}\\
			&\ge \frac{\omega_n}{2C_I} r^{n-1}\frac{\mathcal L^n(A)}{\mathcal L^n(B(x,r))}.
		\end{split}
	\end{equation}
	Letting $\eps\to 0$, we get the first result.
	Defining the cutoff function
	\begin{equation}\label{eq:cutoff}
		\eta(y):=\max\left\{0,1-\frac{1}{r}\dist(y,B(x, r))\right\},\quad y\in\R^n,
	\end{equation}
	for which $\eta=1$ in $B(x,r)$ and $\eta=0$ in $\R^n\setminus B(x,2r)$, we get
	\begin{align*}
		\rcapa_1(A,B(x,2r))
		\le \int_{\R^n} |\nabla \eta|\,d\mathcal L^n
		\le \frac{\omega_n (2r)^n}{r}
		\le 2^{n+1}C_I (\capa_1(A)+\eps)
	\end{align*}
	by the first three lines of \eqref{eq:perimeter estimate}. 
	Letting $\eps\to 0$, we get the second result with $C=2^{n+1}C_I$.
	
	Case 2: Suppose $\mathcal L^n(F\cap B(x,r))< \tfrac 12 \mathcal L^n(B(x,r))$.
	By the relative isoperimetric inequality,
	\begin{align*}
		\capa_1(A)+\eps
		\ge |D\ch_F|(\R^n)
		\ge |D\ch_F|(B(x,r))
		&\ge \frac{1}{C_Ir}\mathcal L^n(F\cap B(x,r))\\
		&\ge \frac{1}{C_Ir}\mathcal L^n(A)\\
		&\ge \frac{\omega_n}{C_I}\mathcal L^n(A)\frac{r^{n-1}}{\mathcal L^n(B(x,r))}.
	\end{align*}
	Letting $\eps\to 0$, we get the first result.
	
	By \eqref{eq:constant C depending only on n}, we get
	\begin{equation}\label{eq:B cap E estimate}
		|D\ch_{B(x,r)\cap F}|(\R^n)\le C|D\ch_F|(B(x,r))\le C\capa_1(A)+C\eps.
	\end{equation}
	By \eqref{eq:CDLP approx}, we find a sequence $\{u_j\}_{j=1}^{\infty}$ in $W^{1,1}(\R^n)$ such that
	$u_j\to \ch_{B(x,r)\cap F}$ in $L^1(\R^n)$, $|Du_j|(\R^n)\to |D\ch_{B(x,r)\cap F}|(\R^n)$,
	and $u_j\ge 1$ a.e.  in a neighborhood of $A$.
	Consider the cutoff function $\eta$ from \eqref{eq:cutoff}.
	We have $u_j\eta \to \ch_{B(x,r)\cap F}$ in $L^1(\R^n)$, $|D(u_j \eta)|(\R^n)\to |D\ch_{B(x,r)\cap F}|(\R^n)$,
	and $u_j\eta \ge 1$ a.e. in a neighborhood of $A$.
	Thus
	\begin{align*}
		\rcapa_1(A,B(x,2r))
		\le \liminf_{j\to\infty}\int_{\R^n}|\nabla (u_j \eta)|\,d\mathcal L^n
		&= |D\ch_{B(x,r)\cap F}|(\R^n)\\
		&\le C\capa_1(A)+C\eps\quad\textrm{by }\eqref{eq:B cap E estimate}.
	\end{align*}
	Letting $\eps\to 0$, we get the second result.
\end{proof}

\begin{definition}\label{def:1 fine topology}
	We say that $A\subset \R^n$ is $1$-thin at the point $x\in \R^n$ if
	\[
	\lim_{r\to 0}\frac{\capa_1(A\cap B(x,r))}{r^{n-1}}=0.
	\]
	We also say that a set $U\subset \R^n$ is $1$-finely open if $\R^n\setminus U$ is $1$-thin at every $x\in U$. Then we define the $1$-fine topology as the collection of $1$-finely open sets on $\R^n$.
	
	We denote the $1$-fine interior of a set $H\subset \R^n$, i.e. the largest $1$-finely open set contained in $H$, by $\fineint H$. We denote the $1$-fine closure of $H$, i.e. the smallest $1$-finely closed set
	containing $H$, by $\overline{H}^1$.
	The $1$-base $b_1 H$ is defined as the set of points
	where $H$ is \emph{not} $1$-thin.
\end{definition}

See \cite[Section 4]{L-FC} for discussion on Definition \ref{def:1 fine topology},
and for a proof of the fact that the
$1$-fine topology is indeed a topology.
In fact, in \cite{L-FC} the criterion
\[
\lim_{r\to 0}\frac{\rcapa_1(A\cap B(x,r),B(x,2r))}{r^{n-1}}=0
\]
for $1$-thinness was used, in the context of more general metric measure spaces.
By \eqref{eq:capa vs rcapa} and Lemma \ref{lem:capa in small ball},
this is equivalent with our current definition in the Euclidean setting.

According to \cite[Corollary 3.5]{L-Fed}, the $1$-fine closure
of $A\subset \R^n$ can be characterized as
\begin{equation}\label{eq:characterization of fine closure}
	\overline{A}^1=A\cup b_1 A.
\end{equation}

\section{Proof of Theorem \ref{thm:K fine basic}}\label{sec:basic}

In this section we prove Theorem \ref{thm:K fine basic}.
We work in $\R^n$ with $n\ge 2$.
First we give the following simple example demonstrating that $K_f$ is generally not a
natural quantity to consider for mappings $f\in W_{\loc}^{1,n}(\R^n;\R^n)$,
let alone mappings of lower regularity.

\begin{example}\label{ex:W11 function}
Let $\{q_j\}_{j=1}^{\infty}$ be an enumeration of points in $\R^n$ with rational coordinates.
Let $f\in W_{\loc}^{1,n}(\R^n;\R^n)$ be such that the first component function is
\[
f_1(x):=\sum_{j=1}^{\infty} 2^{-j}\log \max\{-\log|x-q_j|,1\},\quad x\in \R^n.
\]
Now clearly $\diam f(B(x,r))=\infty$ for every $x\in \R^n$ and $r>0$.
Thus
\[
K_f(x)=\limsup_{r\to 0}\left(\frac{\diam f(B(x,r))^n}{|f(B(x,r))|}\right)^{1/(n-1)}
=\limsup_{r\to 0}\left(\frac{+\infty}{|f(B(x,r))|}\right)^{1/(n-1)}
\]
for every $x\in \R^n$, and so regardless of the value of $|f(B(x,r))|$, the quantity
$K_f$ is either $+\infty$ or undefined.
\end{example}

The Hardy--Littlewood maximal function of a function $u\in L^1_{\loc}(\R^n)$ is defined by
\begin{equation}\label{eq:HL maximal function}
	\mathcal M u(x):=\sup_{r>0}\,\vint{B(x,r)}|u|\,d\mathcal L^n,\quad x\in\R^n.
\end{equation}
We also define a restricted version $\mathcal M_R u(x)$, with $R>0$, by requiring $0<r\le R$ in the supremum.

It is well known, see e.g. \cite[Theorem 1.15]{KLV}, that
\begin{equation}\label{eq:weak estimate}
	|\{x\in \R^n\colon \mathcal Mu(x)>t\}|\le \frac{C_0}{t}\Vert u\Vert_{L^1(\R^n)},\quad t>0,
\end{equation}
for a constant $C_0$ depending only on $n$.

The following weak-type estimate is standard, see e.g. \cite[Theorem 4.18]{EvGa};
in this reference a slightly different definition for capacity is used, but a small modification
of the proof gives the following result.

\begin{lemma}\label{lem:weak standard estimate maximal function}
	Let $u\in \BV(\R^n)$. Then for some constant $C$ depending only on $n$, we have
	\[
	\capa_1(\{\mathcal M u>t\}) \le C\frac{\Vert u\Vert_{\BV(\R^n)}}{t}\quad\textrm{for all }t>0.
	\]
\end{lemma}

We will need the following version of Lemma \ref{lem:weak standard estimate maximal function};
recall also the definition of $\mathcal M_R u$ from above that lemma.

\begin{lemma}\label{lem:weak estimate maximal function}
	Let $u\in L^1(\R^n)$. Then for some constant $C$ depending only on $n$, we have
	\[
	\capa_1(\{\mathcal M_1 u>t\}\cap B(x,1)) \le C\frac{\Vert u\Vert_{\BV(B(x,2))}}{t}\quad\textrm{for all }t>0.
	\]
\end{lemma}
\begin{proof}
	We can assume that $\Vert u\Vert_{\BV(B(x,2))}$ is finite.
	Denote by $Eu$ an extension of $u$ from $B(x,2)$ to $\R^n$
	with $\Vert Eu\Vert_{\BV(\R^n)}\le C' \Vert u\Vert_{\BV(B(x,2))}$,
	for some $C'$ depending only on $n$; see e.g. \cite[Proposition 3.21]{AFP}.
	We estimate
	\begin{align*}
		\capa_1(\{\mathcal M_1 u>t\}\cap B(x,1))
		&= \capa_1(\{\mathcal M_1 Eu>t\}\cap B(x,1))\\
		&\le \capa_1(\{\mathcal M_1 Eu>t\})\\
		&\le C\frac{\Vert Eu \Vert_{\BV(\R^n)}}{t}\quad\textrm{by Lemma }\ref{lem:weak standard estimate maximal function}\\
		&\le CC'\frac{\Vert u \Vert_{\BV(B(x,2))}}{t}.
	\end{align*}
\end{proof}

It is known that Sobolev and BV functions are approximately differentiable
a.e., in the sense of \eqref{eq:approximate differentiability}
below. In the following theorem we show a stronger property,
namely that these functions are also \emph{$1$-finely differentiable} a.e.

Recall the definition of the precise representative from \eqref{eq:precise representative}.

\begin{theorem}\label{thm:fine diff}
	Let $\Om\subset \R^n$ be open and let
	$f\in \BV_{\loc}(\Omega;\R^l)$, with $l\in\N$. Then for a.e. $x\in \Om$ there exists a $1$-finely open
	set $U\ni x$ such that
	\[
	\lim_{U\setminus \{x\}\ni y\to x}\frac{|f^*(y)-f^*(x)-\nabla f(x)(y-x)|}{|y-x|}=0.
	\]
\end{theorem}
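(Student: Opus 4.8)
The plan is to first record that $f$ is classically differentiable in an $L^{n/(n-1)}$-averaged sense at a.e.\ point, and then to upgrade this measure-theoretic smallness to a \emph{pointwise} fine limit by deleting a Borel set that is $1$-thin at the point in question; this follows the strategy of \cite[Theorem 4.8]{LahGFQ}. \emph{Step 1.} For a.e.\ $x\in\Om$, the point $x$ is a Lebesgue point of $f$ and of $\nabla f$, so $f^*(x)=\lim_{r\to 0}\vint{B(x,r)}f\,d\mathcal L^n$ and $\vint{B(x,r)}|\nabla f-\nabla f(x)|\,d\mathcal L^n\to 0$. Set $L_x(y):=f^*(x)+\nabla f(x)(y-x)$ and $g:=f-L_x$; then $g\in W^{1,1}_{\loc}$ near $x$, $\nabla g=\nabla f-\nabla f(x)$, $g^*=f^*-L_x$ (as $L_x$ is continuous) and $g^*(x)=0$. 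The Sobolev--Poincar\'e inequality on $B(x,r)$ together with the elementary bound $|g_{B(x,r)}|=o(r)$ (telescoping $|g_{B(x,2^{-k})}-g_{B(x,2^{-k-1})}|\le C\,2^{-k}\vint{B(x,2^{-k})}|\nabla g|\,d\mathcal L^n$) gives, for a.e.\ $x$,
\[
\lim_{r\to 0}\frac1r\left(\vint{B(x,r)}|g|^{n/(n-1)}\,d\mathcal L^n\right)^{(n-1)/n}=0 .
\]
These are standard facts about Sobolev functions (see \cite{EvGa,AFP}), which I would simply cite.

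\emph{Step 2 (the heart of the matter).} Fix a good $x$ as above. For $\eps>0$ put $N_\eps:=\{y\in\R^n:|f^*(y)-L_x(y)|>\eps|y-x|\}=\{y:|g^*(y)|>\eps|y-x|\}$, a Borel set with $x\notin N_\eps$. I would show $N_\eps$ is $1$-thin at $x$ by a dyadic decomposition: on each annulus $C_k:=\overline{B(x,2^{-k})}\setminus B(x,2^{-k-1})$ one has $N_\eps\cap C_k\subset\{|g^*|>\eps 2^{-k-1}\}$, and a capacitary weak-type estimate for $W^{1,1}$ functions of the shape
\[
\capa_1\big(\{|g^*|\ge t\}\cap B(x,\rho)\big)\le\frac{C}{t}\int_{B(x,2\rho)}\Big(|\nabla g|+\frac{|g|}{\rho}\Big)\,d\mathcal L^n
\]
--- which follows from the Poincar\'e/Riesz-potential representation of $g$ and the standard link between $\capa_1$, perimeter, and the relative isoperimetric inequality, cf.\ \cite{LahGFQ} and Lemma \ref{lem:capa in small ball} --- applied on a ball of radius $\asymp 2^{-k}$ about $x$ with $t=\eps 2^{-k-1}$ yields, up to a dimensional constant,
\[
\frac{\capa_1(N_\eps\cap C_k)}{2^{-k(n-1)}}\le\frac{C}{\eps}\left(\vint{B(x,2^{-k+1})}|\nabla g|\,d\mathcal L^n+\frac{1}{2^{-k}}\vint{B(x,2^{-k+1})}|g|\,d\mathcal L^n\right)=:\beta_k^\eps .
\]
By Step 1, $\beta_k^\eps\to 0$ as $k\to\infty$. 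Summing over $k\ge m$ using countable subadditivity of $\capa_1$ (cf.\ \eqref{eq:capa continuity}) and the geometric series, for $2^{-m-1}<r\le 2^{-m}$ one obtains $\capa_1(N_\eps\cap B(x,r))\le C'(\sup_{k\ge m}\beta_k^\eps)\,r^{n-1}$, which tends to $0$; hence $N_\eps$ is $1$-thin at $x$.

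\emph{Step 3 (one fine neighborhood; conclusion).} Pick $\eps_j\downarrow 0$, and for each $j$ a radius $r_j\downarrow 0$ with $\capa_1(N_{\eps_j}\cap B(x,r))\le 2^{-j}r^{n-1}$ for all $0<r\le r_j$; put
\[
N:=\bigcup_{j=1}^{\infty}\big(N_{\eps_j}\cap(B(x,r_j)\setminus B(x,r_{j+1}))\big).
\]
A telescoping estimate of the same kind shows $N$ is $1$-thin at $x$ and $x\notin N$; moreover every $y\in B(x,r_1)\setminus(N\cup\{x\})$ satisfies $|f^*(y)-L_x(y)|\le\eps_{j(y)}|y-x|$ with $j(y)\to\infty$ as $y\to x$, so $|f^*(y)-L_x(y)|/|y-x|\to 0$ as $y\to x$ off $N$. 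Finally, since $N$ is $1$-thin at $x$ and $x\notin N$, the basic theory of the $1$-fine topology (it is a genuine topology, so the $1$-fine interior of $\R^n\setminus N$ is $1$-finely open and it contains $x$; see \cite{LahGFQ} and the references therein) provides a $1$-finely open set $U\ni x$ with $U\subset\R^n\setminus N$. Replacing $U$ by $U\cap B(x,r_1)$ (still $1$-finely open) and recalling $L_x(y)=f^*(x)+\nabla f(x)(y-x)$ finishes the proof.

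\emph{Main obstacle.} The crux is Step 2: one must pass from the purely $L^1$-averaged smallness of $f-L_x$ to a bound on the \emph{pointwise} values of the precise representative $f^*$ measured by $1$-capacity rather than by Lebesgue measure --- this is exactly why the statement is phrased for $f^*$ and with $1$-fine openness, and a merely Lebesgue-measure-theoretic argument cannot give it. It requires the capacitary weak-type inequality above together with the compatibility of $f^*$ with the $\capa_1$-quasicontinuous representative of $f$. The fine-topology input in Step 3 is standard but genuinely used.
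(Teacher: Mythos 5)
The paper does not actually prove this theorem; it only points to the proof of \cite[Theorem 4.8]{LahGFQ}, and your outline (a.e.\ $L^1$/$L^{n/(n-1)}$-differentiability, capacitary smallness of the bad sets on dyadic annuli around $x$, then a diagonal choice of $\eps_j\downarrow 0$) is the same strategy in spirit. Steps 1 and 2 are essentially sound: the annulus bookkeeping, the convergence $\beta_k^{\eps}\to 0$ at Lebesgue/differentiability points, and the geometric-series summation all work. One caveat in Step 2: with the definition of $\capa_1$ used here, admissible test functions must be $\ge 1$ on a \emph{neighborhood} of the set, so you cannot simply take a cutoff of $g/t$ as competitor for $\{|g^*|\ge t\}$; you should either route the estimate through a restricted maximal function (whose superlevel sets are open, and $|g^*|\le \mathcal M(\ch_{B(x,2\rho)}g)$ on $B(x,\rho)$) together with a capacitary weak-type bound for the maximal function, or cite the known weak-type estimates for the limsup representative of $W^{1,1}$/$\BV$ functions. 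This is a fixable citation/argument issue, not a conceptual one.

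The genuine gap is in Step 3, at the very point you yourself identify as using the fine topology: from ``$N$ is $1$-thin at $x$ and $x\notin N$'' you conclude that the $1$-fine interior of $\R^n\setminus N$ contains $x$ ``because the $1$-fine topology is a genuine topology.'' The topology axioms (closure of the class of $1$-finely open sets under unions and finite intersections) only give that the fine interior is $1$-finely open; they say nothing about whether $x$ belongs to it. What you need is precisely the nontrivial fact that thinness of the complement at $x$ makes a set a fine \emph{neighborhood} of $x$, i.e.\ that some $1$-finely open $U\ni x$ avoids $N$ --- which is exactly the object you are trying to produce, so as written the justification is circular. With the definition used in this paper, the candidate $\{y\notin N:\ N\text{ is }1\text{-thin at }y\}$ is not obviously $1$-finely open (that requires a Kellogg/idempotency-type property of the base operation), and a direct construction is also not immediate: if you delete open supersets of the bad annular pieces you lose fine openness at their boundary points, and if you delete closed sets you must control the capacity of closures. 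For $p>1$ this neighborhood characterization is classical (via the Cartan property and fine continuity of superharmonic functions); for $p=1$ it is a genuine theorem available in the author's earlier work on the $1$-fine topology (see \cite{L-CK} and the references in \cite{LahGFQ}), and it should be invoked as such, or replaced by an explicit construction (e.g.\ arranging $U\setminus\{x\}$ to be Euclidean open, so that fine openness only needs to be checked at $x$). The proof is repairable, but this last step needs a real argument or a precise reference rather than an appeal to the topology axioms.
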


\begin{proof}
	Since the issue is local, we can assume that $\Om=\R^n$.
	First assume also that $l=1$.
	At a.e. $x\in\R^n$, we have
	\begin{equation}\label{eq:approximate differentiability}
	\lim_{r\to 0}\,\vint{B(x,r)}\frac{|f(y)-f^*(x)-\langle \nabla f(x), y-x\rangle|}{r}\,d\mathcal L^n(y)=0,
	\end{equation}
	see \cite[Theorem 3.83]{AFP}, as well as
	\[
	\lim_{r\to 0}\,\vint{B(x,r)}|\nabla f(y)-\nabla f(x)|\,d\mathcal L^n(y)=0
	\quad\textrm{and}\quad
	\lim_{r\to 0}\frac{|D^s f|(B(x,r))}{r^n}=0.
	\]
	Consider such $x$.
	Define $L(z):=\langle \nabla u(x),z\rangle$.
	Thus for the scalings
	\begin{equation}\label{eq:scalings}
		f_{x,r}(z):=\frac{f(x+rz)-f^*(x)}{r},\quad z\in B(0,2),
	\end{equation}
	we have
	\[
	f_{x,r}(\cdot)\to L(\cdot)\ \ \textrm{in }L^1(B(0,2))\textrm{ as }r\to 0
	\quad\textrm{and}\quad
	\nabla f_{x,r}(z) = \nabla f(x+rz),\ z\in B(0,2).
	\]
	Then
	\begin{align*}
		|D (f_{x,r}-L)|(B(0,2))
		&=\int_{B(0,2)}|\nabla f_{x,r}(z)-\nabla f(x)|\,d\mathcal L^n(z)+|D^s f_{x,r}|(B(0,2))\\
		&=\frac{1}{r^n}\int_{B(x,2r)}|\nabla f(y)-\nabla f(x)|\,d\mathcal L^n(y)+\frac{|D^s f|(B(x,2r))}{r^n}\\
		&\to 0\quad\textrm{as }r\to 0.
	\end{align*}
	In conclusion, we have the norm convergence
	\begin{equation}\label{eq:convergence of blowups}
		f_{x,r} \to L\quad\textrm{in }\BV(B(0,2)).
	\end{equation}
	Note that $(f^*)_{x,r}=(f_{x,r})^*$ in $B(0,2)$, so we simply use the notation $f^*_{x,r}$.
	Note also that
	\[
	|f^*_{x,r}-L|=|(f_{x,r}-L)^*|\le |f_{x,r}-L|^*\le \mathcal M_1|f_{x,r}-L|,
	\]
	and so for every $j\in\N$ and $t>0$ we get
	\begin{align*}
		&\capa_1(\{z\in B(0,1)\colon |f^*_{x,2^{-j}}(z)-L(z)|>t\})\\
		&\qquad \le \capa_1(\{z\in B(0,1)\colon \mathcal M_1|f_{x,2^{-j}}-L|(z)>t\})\\
		&\qquad \le C\frac{\Vert f_{x,2^{-j}}-L\Vert_{\BV(B(0,2))}}{t}\quad \textrm{by Lemma \ref{lem:weak estimate maximal function}}\\
		&\qquad \to 0\quad\textrm{as }j\to\infty\quad \textrm{by }\eqref{eq:convergence of blowups}.
	\end{align*}
	Thus we can choose numbers $t_j\searrow 0$ such that for the sets
	\[
	D_j:=\{z\in B(0,1)\colon |f^*_{x,2^{-j}}(z)-L(z)|>t_j\},
	\]
	we get $\capa_1(D_j)\to 0$ as $j\to\infty$.
	Define $A_j:=D_j\setminus B(0,1/2)$
	and $A:=\bigcup_{j=1}^{\infty}2^{-j}A_j+x$.
	Now for all $k\in\N$, we have
	\begin{align*}
		\capa_1(A\cap B(x,2^{-k}))
		&\le \sum_{j=k}^{\infty}\capa_1(2^{-j}A_j+x)\\
		&= \sum_{j=k}^{\infty}2^{-j(n-1)}\capa_1(A_j)\\
		&\le  \sum_{j=k}^{\infty}2^{-j(n-1)}\capa_1(D_j)\\
		&\le 2^{-k(n-1)+1}\max_{j\ge k}\capa_1(D_j).
	\end{align*}
	Since $\capa_1(D_j)\to 0$, we obtain
	\[
	\frac{\capa_1(A\cap B(0,2^{-k}))}{2^{-k(n-1)}}\to 0\quad
	\textrm{as }k\to\infty,
	\]
	and so clearly $A$ is $1$-thin at $x$.
	By \eqref{eq:characterization of fine closure}, the $1$-finely open set
	$U:=\R^n\setminus \overline{A}^1$ contains $x$.
	For any $j\in\N$ and $y\in U\cap B(x,2^{-j})\setminus B(x,2^{-j-1})$, we have
	\begin{align*}
		\frac{|f^*(y)-f^*(x)-L(y-x)|}{|y-x|}
		&\le 2\frac{|f^*(y)-f^*(x)-L(y-x)|}{2^{-j}}\\
		&= 2|f^*_{x,2^{-j}}((y-x)/2^{-j})-L((y-x)/2^{-j})|\\
		&\le 2t_j\to 0\quad\textrm{as }j\to\infty,
	\end{align*}
	and so
	\[
	\lim_{U\ni y\to x}\frac{|f^*(y)-f^*(x)-\langle\nabla f(x), y-x\rangle|}{|y-x|}=0.
	\]
	
	Finally, the generalization to the case $l\in\N$ is immediate, since the intersection of
	a finite number of $1$-finely open sets is still $1$-finely open.
\end{proof}

Given $f\in W^{1,1}_{\loc}(\Om;\R^n)$,
note that the weak gradient $\nabla f$ is a function in $L^1_{\loc}(\Om;\R^{n\times n})$
and thus may be understood
to be an equivalence class rather than a pointwise defined function.
Below, we sometimes consider $\nabla f$ at a given point; for this
we can understand $\nabla f$ to be well defined everywhere by using the above theorem and by
defining $\nabla f$ to be zero in the exceptional set.

We restate the following definition already given in the Introduction.

\begin{definition}
	Let $f\colon \R^n\to [-\infty,\infty]^{n}$ and $U\subset \R^n$. Then we let
	\[
	K_{f,U}(x,r):=\left(\frac{\diam f(B(x,r)\cap U)^n}{|f(B(x,r))|}\right)^{1/(n-1)}\quad\textrm{and}\quad
	K_{f}^{\mathrm{fine}}(x):=\inf\limsup_{r\to 0}K_{f,U}(x,r),
	\]
	where the infimum is taken over $1$-finely open sets $U\ni x$.
	If $|f(B(x,r))|=0$, then we interpret $K_{f,U}(x,r)=\infty$.
\end{definition}

\begin{proof}[Proof of Theorem \ref{thm:K fine basic}]
	Let $f\in W^{1,1}_{\loc}(\R^n;\R^n)$; since the claim is local, we can assume that in fact
	$f\in W^{1,1}(\R^n;\R^n)$.
	Using e.g. \cite[Theorem 6.13]{EvGa},
	we find a Lipschitz mapping $\widehat{f}\in \Lip(\R^n;\R^n)$ such that the complement of the set
	\[
	H:=\{z\in \R^n\colon (\widehat{f})^*(z)= f^*(z)
	\  \textrm{and}\  \nabla \widehat{f}(z)= \nabla f(z)\}
	\]
	has arbitrarily small Lebesgue measure.
	By e.g. \cite[Lemma 2.74]{AFP}, $\mathcal L^n$-almost all of the set $\{z\in\R^n\colon \det \nabla \widehat{f}(z)\neq 0\}$ can be covered
	by compact sets $\{K_j\}_{j=1}^{\infty}$ such that $\widehat{f}$ is injective
	in each $K_j$.
	Consider a point $x\in\R^n$ for which $\det \nabla f(x)\neq 0$.
	Since the theorem is formulated as an ``a.e.'' result,
	we can assume that
	\[
	\lim_{r\to 0}\frac{|B(x,r)\cap H|}{|B(x,r)|}=1
	\quad\textrm{and}\quad
	\lim_{r\to 0}\frac{|B(x,r)\cap K_j|}{|B(x,r)|}=1
	\quad\textrm{for some }j,
	\]
	that $f$ is $1$-finely differentiable as in Theorem \ref{thm:fine diff}
	so that we find a $1$-finely open set $U\ni x$ such that
	\begin{equation}\label{eq:1 fine diff}
	\lim_{U\setminus \{x\}\ni y\to x}\frac{|f^*(y)-f^*(x)-\nabla f(x)(y-x)|}{|y-x|}=0,
	\end{equation}
	and that $x$ is a Lebesgue point of $\nabla f$:
	\begin{equation}\label{eq:Lebesgue p}
	\lim_{r\to 0}\,\vint{B(x,r)}| \nabla f(y)-\nabla f(x)| \,d\mathcal L^n(y)=0.
	\end{equation}
	For the scalings
	\[
	f_{r}(z):=\frac{f^*(x+rz)-f^*(x)}{r},\quad z\in B(0,1),
	\]
	we have $\nabla f_{r}(z) = \nabla f(x+rz)$, with $z\in B(0,1)$, and thus
	by \eqref{eq:Lebesgue p},
	\begin{equation}\label{eq:B01 Lebesgue}
	\lim_{r\to 0}\int_{B(0,1)}|\nabla f_{r} -\nabla f(x)|\,d\mathcal L^n =0.
	\end{equation}
	Fix $\eps>0$.
	Let
	\[
	D^r:=\{z\in B(0,1)\colon | \det(\nabla f_r(z) -\nabla f(x))| < \eps |\det \nabla f(x)|\}.
	\]
	By \eqref{eq:B01 Lebesgue}, we also have
	$|B(0,1)\setminus D^r|<\omega_n\eps$ for sufficiently small $r>0$.
	Let
	\[
	H_r:=r^{-1}(H-x).
	\]
	For sufficiently small $r>0$ we have in total
	\begin{equation}\label{eq:small Lebesgue measure estimate}
	|B(0,1)\setminus D^r|+|B(0,1)\setminus H_r|+|B(0,1)\setminus (K_j)_r|<\omega_n \eps.
	\end{equation}
	In the set $D^r\cap H_r\cap (K_j)_r$, we have
	\begin{equation}\label{eq:det nabla estimate}
	\begin{split}
	|\det \nabla \widehat{f}_r|
	=|\det \nabla f_r|
	\ge |\det \nabla f(x)| -|\det (\nabla f_r-\nabla f(x))|
	\ge (1-\eps)|\det \nabla f(x)|.
	\end{split}
	\end{equation}
	Now by the area formula, see e.g. \cite[Theorem 2.71]{AFP}, we get
	\begin{align*}
	|f_r(B(0,1))|
	&\ge |f_r(D^r\cap H_r\cap (K_j)_r)|\\
	&=|\widehat{f}_r(D^r\cap H_r\cap (K_j)_r)|\\
	&=\int_{D^r\cap H_r\cap (K_j)_r}|\det \nabla \widehat{f}_r|\,d\mathcal L^n\\
	&\ge (1-\eps)\int_{D^r\cap H_r\cap (K_j)_r}|\det \nabla f(x)|\,d\mathcal L^n
	\quad \textrm{by }\eqref{eq:det nabla estimate}\\
	&\ge (1-\eps)^2 \omega_n |\det \nabla f(x)|\quad \textrm{by }\eqref{eq:small Lebesgue measure estimate}.
	\end{align*}
	Thus
	\[
	|f^*(B(x,r))|
	= r^n|f_r(B(0,1))|
	\ge (1-\eps)^2 \omega_n r^n |\det \nabla f(x)|.
	\]
	Thus using also the fine differentiability \eqref{eq:1 fine diff}, we get
	\begin{align*}
	\limsup_{r\to 0}\frac{\diam f^*(B(x,r)\cap U)^n}{|f^*(B(x,r))|}
	&\le \limsup_{r\to 0}\frac{2^n\Vert \nabla f(x)\Vert^n 
		 r^n}{(1-\eps)^2 \omega_n r^n |\det \nabla f(x)|}\\
	&=\frac{2^n\Vert \nabla f(x)\Vert^n}{(1-\eps)^2 \omega_n |\det \nabla f(x)|}.
	\end{align*}
	It follows that
	\[
	(1-\eps)^2 K_{f^*}^{\mathrm{fine}}(x) ^{n-1}|\det \nabla f(x)|
	\le \frac{2^n}{\omega_n}\Vert \nabla f(x)\Vert^n.
	\]
	Letting $\eps\to 0$, we get the result.
\end{proof}

\section{Proof of Theorem \ref{thm:main}}

In this section we prove our main Theorem \ref{thm:main}.
At first we work in $\R^n$ with $n\ge 2$, but in our main results we need $n=2$.
We start with the following simple lemma.

\begin{lemma}\label{lem:tightness}
Assume $\Om\subset \R^n$ is open, $f\in W^{1,1}_{\loc}(\Om;\R^n)$ is continuous, $x\in\Om$,
and suppose $U\ni x$ is a $1$-finely open set such that
\begin{equation}\label{eq:1 fine diff ass}
\lim_{U\setminus \{x\}\ni y\to x}\frac{|f(y)-f(x)-\nabla f(x) (y-x)|}{|y-x|}=0.
\end{equation}
Then
\[
\limsup_{r\to 0}\frac{\diam f(B(x,r)\cap U)^n}{|B(x,r)|}
= \frac{2^n}{\omega_n} \Vert \nabla f(x)\Vert^n.
\]
\end{lemma}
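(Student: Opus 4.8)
The plan is to establish the two inequalities $\le$ and $\ge$ for the $\limsup$ separately, writing $L:=\nabla f(x)$ throughout and fixing a unit vector $v$ with $|Lv|=\Vert L\Vert$; if $\Vert L\Vert=0$ the asserted lower bound is trivial, so for that direction I may assume $\Vert L\Vert>0$. For the upper bound, given $\eps>0$ the hypothesis \eqref{eq:1 fine diff ass} supplies $\delta>0$ with $|f(y)-f(x)-L(y-x)|\le\eps|y-x|$ whenever $y\in U$ and $0<|y-x|<\delta$. Then for $r<\delta$ and any $y,y'\in B(x,r)\cap U$ the triangle inequality gives $|f(y)-f(y')|\le|L(y-y')|+2\eps r\le(2\Vert L\Vert+2\eps)r$, so $\diam f(B(x,r)\cap U)\le(2\Vert L\Vert+2\eps)r$ and hence $\limsup_{r\to0}\diam f(B(x,r)\cap U)^n/|B(x,r)|\le(2\Vert L\Vert+2\eps)^n/\omega_n$; letting $\eps\to0$ finishes this direction.

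For the lower bound, the point is that although $U$ is only $1$-finely open, its complement is $1$-thin at $x$ by Definition \ref{def:1 fine topology}, and applying Lemma \ref{lem:capa in small ball} to $A=(\R^n\setminus U)\cap B(x,r)$ shows that $|(\R^n\setminus U)\cap B(x,r)|/|B(x,r)|\to0$ as $r\to0$. I then fix $\eta\in(0,1/4)$ and $\eps>0$, and take $r$ small enough that simultaneously $|(\R^n\setminus U)\cap B(x,r)|<\omega_n\eta^n r^n$ and the $\eps$-linear approximation from \eqref{eq:1 fine diff ass} holds on $U\cap B(x,r)$. The two balls $B(x+(1-2\eta)rv,\eta r)$ and $B(x-(1-2\eta)rv,\eta r)$ are contained in $B(x,r)$ and each has Lebesgue measure $\omega_n\eta^n r^n$, so neither can lie in $\R^n\setminus U$; hence I can pick $y\in U\cap B(x+(1-2\eta)rv,\eta r)$ and $y'\in U\cap B(x-(1-2\eta)rv,\eta r)$. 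Writing $y-y'=2(1-2\eta)rv+(e-e')$ with $|e|,|e'|<\eta r$ and using $|Lv|=\Vert L\Vert$, a short computation gives $|L(y-y')|\ge 2r\Vert L\Vert(1-3\eta)$, and then the approximation estimate yields $|f(y)-f(y')|\ge 2r\Vert L\Vert(1-3\eta)-2\eps r$. Since $y,y'\in B(x,r)\cap U$, this bounds $\diam f(B(x,r)\cap U)$ from below; dividing by $|B(x,r)|=\omega_n r^n$, taking $\limsup$ over small $r$, and then sending $\eps\to0$ and $\eta\to0$ gives $\limsup_{r\to0}\diam f(B(x,r)\cap U)^n/|B(x,r)|\ge 2^n\Vert L\Vert^n/\omega_n$, which together with the upper bound is the claimed equality.

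I expect the main obstacle to be the lower bound, and within it the one genuinely nontrivial ingredient: the fact that $U$ must meet both of the small, near-antipodal balls $B(x\pm(1-2\eta)rv,\eta r)$ for all sufficiently small $r$. This is exactly where the $1$-thinness of $\R^n\setminus U$ enters, through the measure--capacity inequality of Lemma \ref{lem:capa in small ball}. Once the two points $y,y'$ are in hand, the rest is routine: the triangle inequality together with the pointwise linear approximation of $f$ on $U$ from \eqref{eq:1 fine diff ass}.
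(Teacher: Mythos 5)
Your proof is correct and follows essentially the same route as the paper's: Lemma \ref{lem:capa in small ball} converts the $1$-thinness of $\R^n\setminus U$ into vanishing Lebesgue density of the complement, which yields near-antipodal points of $U$ in the direction maximizing $|\nabla f(x)v|$ and hence $\diam f(B(x,r)\cap U)\approx 2\Vert \nabla f(x)\Vert r$, with the fine differentiability \eqref{eq:1 fine diff ass} transferring the estimate from the linear map to $f$. The only difference is that you spell out explicitly the geometric step the paper's proof labels as ``clearly''.
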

\begin{proof}
By Lemma \ref{lem:capa in small ball}, we have
\[
\lim_{r\to 0}\frac{|B(x,r)\setminus U|}{|B(x,r)|}=0,
\]
and so for the linear mapping $L(y):=\nabla f(x)(y-x)$, we clearly have
\[
	\lim_{r\to 0}\frac{\diam L(B(x,r)\cap U)}{r}=2\Vert \nabla f(x)\Vert.
\]
Then by the fine differentiability \eqref{eq:1 fine diff ass},  we also have
\begin{align*}
	\lim_{r\to 0}\frac{\diam f(B(x,r)\cap U)}{r}= 2\Vert\nabla f(x)\Vert,
\end{align*}
and so the claim follows.
\end{proof}

Now we show that the following version of Theorem \ref{thm:K fine basic} holds
when $f$ is additionally assumed to be a homeomorphism;
recall \eqref{eq:Jacobian}.

\begin{proposition}\label{prop:homeo}
	Let $\Om,\Om'\subset \R^n$ be open and let
	$f\in W_{\loc}^{1,1}(\Om;\Om')$ be a homeomorphism. Then we have
	\[
		K_f^{\mathrm{fine}}(x)^{n-1}J_f(x)= \frac{2^n}{\omega_n} \Vert \nabla f(x)\Vert^n
		\quad \textrm{for a.e. }x\in \Om\textrm{ for which }K_f^{\mathrm{fine}}(x)<\infty,
	\]
	and $K_f(x)=K_f^{\mathrm{fine}}(x)$ for a.e. $x\in \Om$ where $f$ is differentiable
	and $0<J_f(x)<\infty$.
\end{proposition}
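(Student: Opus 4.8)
The plan is to identify both $K_f^{\mathrm{fine}}(x)$ and $K_f(x)$, for a.e.\ $x$, with the explicit quantity $\bigl(2^n\Vert\nabla f(x)\Vert^n/(\omega_n J_f(x))\bigr)^{1/(n-1)}$, and then compare the two. The substantial inputs are Lemma \ref{lem:tightness}, Theorem \ref{thm:fine diff}, and the fact (recalled after \eqref{eq:Jacobian}) that $J_f(x)=\lim_{r\to0}|f(B(x,r))|/|B(x,r)|$ as a genuine limit for a.e.\ $x$; observe that, in contrast to the proof of Theorem \ref{thm:K fine basic}, no work is needed here to relate $|f(B(x,r))|$ to an analytic Jacobian, since for a homeomorphism $|f(B(x,r))|$ is by definition asymptotic to $\omega_n J_f(x)r^n$ (and is strictly positive for every $r>0$). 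I will also repeatedly use the elementary fact that $\limsup_{r\to0}c_r^n/d_r=(\limsup_{r\to0}c_r)^n/d$ whenever $c_r\ge 0$ and $d_r\to d\in(0,\infty)$.

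First I would fix a null set $Z\subset\Om$ outside of which: (i) $|\nabla f(x)|<\infty$; (ii) $J_f(x)$ exists as above and is finite; (iii) there is a $1$-finely open $V_x\ni x$ along which \eqref{eq:1 fine diff ass} holds (Theorem \ref{thm:fine diff}, with $f^*=f$ since $f$ is continuous); and (iv) if $f$ is differentiable at $x$ then its classical derivative equals $\nabla f(x)$. Fix $x\in\Om\setminus Z$, and write $\Lambda:=2^n\Vert\nabla f(x)\Vert^n/(\omega_n J_f(x))$ when $J_f(x)>0$. For the upper bound on $K_f^{\mathrm{fine}}(x)$ I would use the single competitor $U=V_x$: by Lemma \ref{lem:tightness} (equivalently, $\limsup_{r\to0}\diam f(B(x,r)\cap V_x)/r=2\Vert\nabla f(x)\Vert$), together with (ii) and the elementary fact,
\[
K_f^{\mathrm{fine}}(x)^{n-1}\le\limsup_{r\to0}\frac{\diam f(B(x,r)\cap V_x)^n}{|f(B(x,r))|}=\Lambda
\]
whenever $J_f(x)>0$; in particular $0<J_f(x)<\infty$ already forces $K_f^{\mathrm{fine}}(x)<\infty$.

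The crux is the matching lower bound, which must hold uniformly over \emph{all} $1$-finely open $U\ni x$, not merely over $V_x$. Given such a $U$, I would pass to $U\cap V_x$, which is again $1$-finely open (finite intersections of $1$-finely open sets are $1$-finely open, by subadditivity of $\capa_1$), contains $x$, and still satisfies \eqref{eq:1 fine diff ass}; Lemma \ref{lem:tightness} then gives $\limsup_{r\to0}\diam f(B(x,r)\cap U\cap V_x)/r=2\Vert\nabla f(x)\Vert$, whence, using $\diam f(B(x,r)\cap U)\ge\diam f(B(x,r)\cap U\cap V_x)$, also $\limsup_{r\to0}\diam f(B(x,r)\cap U)/r\ge2\Vert\nabla f(x)\Vert$. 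Combining with (ii) and the elementary fact, $\limsup_{r\to0}\diam f(B(x,r)\cap U)^n/|f(B(x,r))|\ge\Lambda$ when $J_f(x)>0$, and it equals $+\infty$ when $J_f(x)=0<\Vert\nabla f(x)\Vert$. Taking the infimum over $U$: when $J_f(x)>0$ this yields $K_f^{\mathrm{fine}}(x)^{n-1}J_f(x)\ge2^n\Vert\nabla f(x)\Vert^n/\omega_n$; when $J_f(x)=0<\Vert\nabla f(x)\Vert$ it yields $K_f^{\mathrm{fine}}(x)=\infty$, so such $x$ does not enter the first assertion; and when $\Vert\nabla f(x)\Vert=J_f(x)=0$ the asserted identity is trivially $0=0$ (recall $K_f^{\mathrm{fine}}(x)<\infty$ there). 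Together with the upper bound this gives the first assertion.

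For the second assertion, at any differentiability point $x\in\Om\setminus Z$ with $0<J_f(x)<\infty$, a routine computation (using (iv)) gives $\diam f(B(x,r))=(2\Vert\nabla f(x)\Vert+o(1))r$, so by (ii) and the elementary fact $K_f(x)^{n-1}=\Lambda$; since $K_f^{\mathrm{fine}}(x)<\infty$ at such $x$, the first assertion gives $K_f^{\mathrm{fine}}(x)^{n-1}=\Lambda$ as well, whence $K_f(x)=K_f^{\mathrm{fine}}(x)$. The step I expect to be the main obstacle is the uniformity in the lower bound: one cannot simply evaluate $K_{f,U}$ along the preferred set $V_x$, and it is the device of intersecting an arbitrary $1$-finely open $U$ with $V_x$ that makes the lower estimate survive passage to the infimum.
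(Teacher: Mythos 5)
Your proposal is correct and follows essentially the same route as the paper: the upper bound uses the fine-differentiability set as a single competitor together with the existence of $J_f$ as a limit, and the lower bound hinges on exactly the paper's device of intersecting a competitor with the fine-differentiability set and invoking Lemma \ref{lem:tightness} (the paper phrases this with an $\eps$-near-optimal $V$ rather than an arbitrary $U$, which is an immaterial difference), after which the second assertion is the same direct computation at differentiability points. Your explicit handling of the degenerate cases $J_f(x)=0$ is slightly more careful than the paper's, but the substance is identical.
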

\begin{proof}
	Consider $x\in\Om$ for which $K_f^{\mathrm{fine}}(x)<\infty$.
	Thus we find a $1$-finely open set $V\ni x$ such that
	\begin{align*}
		\limsup_{r\to 0}\frac{\diam f(B(x,r)\cap V)^n}{|f(B(x,r))|}<\infty.
	\end{align*}
	Excluding a $\mathcal L^n$-negligible set we can also assume that
	$J_f(x)<\infty$ exists as a limit
	(recall the discussion after \eqref{eq:Jacobian}),
	and that
	we find a $1$-finely open set $U\ni x$ with
	\[
		\lim_{U\setminus\{x\}\ni y\to x}\frac{|f(y)-f(x)-\nabla f(x)(y-x)|}{|y-x|}=0,
	\]
	recall Theorem \ref{thm:fine diff}.
	To prove one inequality, we estimate
	\begin{align*}
		K_f^{\mathrm{fine}}(x)^{n-1}J_f(x)
		&\le \limsup_{r\to 0}\frac{\diam f(B(x,r)\cap V\cap U)^n}{|f(B(x,r))|}
		\lim_{r\to 0}\frac{|f(B(x,r))|}{|B(x,r)|}\\
		&= \limsup_{r\to 0}\frac{\diam f(B(x,r)\cap V\cap U)^n}{|f(B(x,r))|}
		\frac{|f(B(x,r))|}{|B(x,r)|}\\
		&\le \limsup_{r\to 0}\frac{\diam f(B(x,r)\cap U)^n}{|B(x,r)|}\\
		&= \frac{2^n}{\omega_n} \Vert \nabla f(x)\Vert^n
	\end{align*}
	by Lemma \ref{lem:tightness}.
	
	Then we prove the opposite inequality.
	Let $\eps>0$.
	We can choose the $1$-finely open set $V\ni x$ such that
	\begin{align*}
	K_f^{\mathrm{fine}}(x)^{n-1}
	&> \limsup_{r\to 0}\frac{\diam f(B(x,r)\cap V)^n}{|f(B(x,r))|}-\eps\\
	&\ge \limsup_{r\to 0}\frac{\diam f(B(x,r)\cap V\cap U)^n}{|f(B(x,r))|}-\eps.
	\end{align*}
	Then
	\begin{align*}
	K_f^{\mathrm{fine}}(x)^{n-1}J_f(x)
	&\ge \left(\limsup_{r\to 0}\frac{\diam f(B(x,r)\cap V\cap U)^n}{|f(B(x,r))|}-\eps\right)
	\lim_{r\to 0}\frac{|f(B(x,r))|}{|B(x,r)|}\\
	&= \limsup_{r\to 0}\frac{\diam f(B(x,r)\cap V\cap U)^n}{|B(x,r)|}-\lim_{r\to 0}\eps\frac{|f(B(x,r))|}{|B(x,r)|}\\
	&= \frac{2^n}{\omega_n} \Vert \nabla f(x)\Vert^n-\eps J_f(x)
	\end{align*}
by Lemma \ref{lem:tightness}. Letting $\eps\to 0$, we get the other inequality.

If $f$ is differentiable at $x\in\Om$ and $0<J_f(x)<\infty$,
we can again assume that
$J_f(x)$ exists as a limit, and
then we also have
\begin{align*}
	K_f(x)^{n-1}J_f(x)
	&= \limsup_{r\to 0}\frac{\diam f(B(x,r))^n}{|f(B(x,r))|}
	\lim_{r\to 0}\frac{|f(B(x,r))|}{|B(x,r)|}\\
	&= \limsup_{r\to 0}\frac{\diam f(B(x,r))^n}{|B(x,r)|}\\
	&= \frac{2^n}{\omega_n} \Vert \nabla f(x)\Vert^n,
\end{align*}
and so
\[
K_f(x)^{n-1}
=\frac{2^n}{\omega_n} \frac{\Vert \nabla f(x)\Vert^n}{J_f(x)}
=K_f^{\mathrm{fine}}(x)^{n-1},
\]
where we also used the first part of the proposition, which is applicable since
$K_f^{\mathrm{fine}}(x)\le K_f(x)<\infty$.
\end{proof}

We note that equation \eqref{eq:Kf n minus one} in the Introduction can be proved similarly to
Proposition \ref{prop:homeo}.

We will use Whitney-type coverings consisting of disks.
As with balls so far, a disk is always understood to be open unless otherwise specified.

\begin{lemma}\label{lem:Whitney}
	Let $A\subset   D\subset W$, where $W\subset \R^2$ is an open set
	and $A$ is dense in $D$.
	Given a scale $0<R<\infty$, there exists a finite or countable Whitney-type covering $\{B_k=B(x_k,r_k)\}_k$ of $A$ in $W$,
	with $x_k\in A$, $r_k\le R$, and the following properties:
	\begin{enumerate}
		\item $B_k\subset W$ and $D\subset \bigcup_{k}\tfrac 12 B_k$,
		\item If $B_k\cap B_l\neq \emptyset$, then $r_{k}\le 2r_{l}$;
		\item The disks $B_k$ can be divided into
		$6400$ collections of pairwise disjoint disks.
	\end{enumerate}
\end{lemma}

\begin{proof}
	For every $x\in A$, let $r_x:=\min\{R,\tfrac{1}{4} \dist(x,\R^n\setminus W)\}$.
	Consider the covering $\{B(x,\frac{1}{10}r_x)\}_{x\in A}$.
	Clearly this is also a covering of $D$.
	By the $5$-covering theorem (see e.g. \cite[Theorem 1.24]{EvGa}),
	we can pick an at most countable
	collection of pairwise disjoint disks $B(x_k,\tfrac{1}{10}r_k)$ such that the disks
	$B(x_k,\tfrac{1}{2} r_k)$ cover $D$.
	Denote $B_k=B(x_k,r_k)$. We have established property (1).
	
	Suppose $B_k\cap B_l\neq \emptyset$. If $r_l=\tfrac{1}{4} \dist(x_l,\R^n\setminus W)$, then
	\begin{align*}
		4r_l
		=\dist(x_l,\R^n\setminus W)
		\ge \dist(x_k,\R^n\setminus W)-r_l-r_k
		\ge 4r_k-r_l-r_k
		= 3r_k-r_l,
	\end{align*}
	and so we get $2r_l\ge r_k$.
	If $r_l=R$, then $r_k\le R=r_l$. Thus we get property (2).
	
	For a given $k$, denote by $I$ the set of those indices $l\in I$ such that $B_l\cap B_k\neq \emptyset$.
	For all $l\in I$, by (2) we have $r_k\le 2r_l$ and $\tfrac{1}{10}B_l\subset 4B_k$, and so
	\[
	\sum_{l\in I}400^{-1}\pi r_k^2
	\le \sum_{l\in I}100^{-1}\pi r_l^2
	= \sum_{l\in I} \mathcal L^2(\tfrac{1}{10} B_l)
	\le \mathcal L^2(4B_k)
	= 16 \pi r_k^2,
	\]
	and so the cardinality of $I$ is at most $6400$, and we obtain (3).
\end{proof}

\begin{lemma}\label{lem:H and capa}
	Let $A\subset \R^2$. Then $\mathcal H^1_{\infty}(A)\le 10\capa_1(A)$.
\end{lemma}
\begin{proof}
	We can assume that $\capa_1(A)<\infty$.
	Let $\eps>0$.
	We find a function $u\in W^{1,1}(\R^2)$ such that
	$u\ge 1$ in a neighborhood of $A$, and
	\[
	\int_{\R^2} |\nabla u|\,d\mathcal L^2\le \capa_1(A)+\eps.
	\]
	Here $u\in W^{1,1}(\R^2)\subset  \BV(\R^2)$ with $|Du|(\R^2)= \int_{\R^2} |\nabla u|\,d\mathcal L^2$,
	and then by the coarea formula \eqref{eq:coarea} we find
	a set $E:=\{u>t\}$ for some $0<t<1$, for which
	\[
	|D\ch_{E}|(\R^2)\le |Du|(\R^2) \le \capa_1(A)+\eps,
	\]
	and  $A$ is contained in the interior of $E$.
	Then necessarily $|E|<\infty$, and for every $x\in A$ we find $r_x>0$ such that
	\[
	\frac{|B(x,r_x)\cap E|}{|B(x,r_x)|}=\frac{1}{2}.
	\]
	From the relative isoperimetric inequality \eqref{eq:rel isop ineq}, we get
	\[
	\frac{\pi r_x^2}{2}
	=\min\{|B(x,r_x)\cap E|,|B(x,r_x)\setminus E|\}
	\le r_x|D\ch_{E}|(B(x,r_x)).
	\]
	In particular, the radii $r_x$ are uniformly bounded from above by $(2/\pi)|D\ch_E|(\R^2)$.
	By the $5$-covering theorem (see e.g. \cite[p. 60]{HKSTbook}),
	we can choose a finite or countable collection $\{B(x_j,r_j)\}_{j}$ of pairwise disjoint
	disks such that
	the disks $B(x_j,5r_j)$ cover $A$.
	Then
	\begin{align*}
		\mathcal H_{\infty}^1(A)
		\le \sum_{j}10r_j
		\le \frac{20}{\pi}\sum_{j}|D\ch_{E}|(B(x_j,r_j))
		\le 10 |D\ch_{E}|(\R^2)
		\le 10(\capa_1(A)+\eps).
	\end{align*}
	Letting $\eps\to 0$, we get the result.
\end{proof}

For $x\in\R^2$, let $p(x):=|x|$.

\begin{lemma}\label{lem:polar projection}
	Let $A\subset\R^2$. Then we have $\mathcal L^1(p(A))\le 10\capa_1(A)$.
\end{lemma}
\begin{proof}
	Note that $p$ is a $1$-Lipschitz function. Thus we estimate
	\[
	\mathcal L^1(p(A))
	=\mathcal H^1_{\infty}(p(A))
	\le \mathcal H^1_{\infty}(A)
	\le 10\capa_1(A)
	\]
	by Lemma \ref{lem:H and capa}.
\end{proof}

The following theorem is a more general version of our main Theorem \ref{thm:main}.
Note in particular that the function $K_f^{\mathrm{fine}}$
is not generally known to be measurable; in Theorem \ref{thm:main} measurability
is an assumption implicitly contained in the fact that $K_f^{\mathrm{fine}}\in L^{p^*/2}_{\loc}(\R^n)$.

\begin{theorem}\label{thm:main text}
	Let $\Om,\Om'\subset \R^2$ be open sets with $|\Om'|<\infty$,
	and let $f\colon \Om\to \Om'$ be a homeomorphism.
	Let $1\le p\le 2$.
	Suppose $K_f^{\mathrm{fine}}<\infty$ outside a set $E\subset \Om$
	such that for a.e. line $L$ parallel to a coordinate axis,
	$E\cap L$ is at most countable. 
	Suppose also that there is $h\ge K_f^{\mathrm{fine}}$ such that 
	$h\in L^{p^*/2}(\Om)$.
	Then $f\in D^{p}(\Om;\R^2)$,
	and in the case $p=2$  we obtain that $f$ is quasiconformal with
	$K_f(x)=K_f^{\mathrm{fine}}(x)$ for a.e. $x\in\Om$ and
	\begin{equation}\label{eq:Ln energy thm}
		\Vert \nabla f(x)\Vert ^2
		\le \frac{\pi}{4}\Vert K_f^{\mathrm{fine}}\Vert_{L^{\infty}(\Om)}J_f(x)
		\quad \textrm{for a.e. }x\in \Om.
	\end{equation}
\end{theorem}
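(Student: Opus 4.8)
The plan is to first prove that $f\in W^{1,1}_{\loc}(\Om;\R^n)$ (equivalently, since $f$ is continuous, that $\nabla f\in L^1_{\loc}(\Om;\R^{n\times n})$) and then to deduce everything else from Proposition \ref{prop:homeo}. Indeed, once $f\in W^{1,1}_{\loc}(\Om;\R^n)$, Proposition \ref{prop:homeo} gives
\[
\Vert\nabla f(x)\Vert^n=\frac{\omega_n}{2^n}K_f^{\mathrm{fine}}(x)^{n-1}J_f(x)\le\frac{\omega_n}{2^n}h(x)^{n-1}J_f(x)\quad\textrm{for a.e. }x\in\Om,
\]
since $E$ is $\mathcal L^n$-negligible. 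Combined with $\int_\Om J_f\,d\mathcal L^n\le|\Om'|<\infty$ from \eqref{eq:Jacobian basic}, Hölder's inequality with exponents $\tfrac{n}{n-p}$ and $\tfrac np$ (the identity $p^*(n-1)/n=p(n-1)/(n-p)$ is exactly what makes this close), and the hypothesis $h\in L^{p^*(n-1)/n}(\Om)$, this yields $\nabla f\in L^p(\Om;\R^{n\times n})$, hence $f\in D^p(\Om;\R^n)$. When $p=n$, the displayed inequality is \eqref{eq:Ln energy thm} and $\nabla f\in L^n(\Om)$, so $f\in W^{1,n}_{\loc}(\Om;\R^n)$; being a homeomorphism, $f$ is then monotone, differentiable a.e., absolutely continuous in measure, with $|\det\nabla f|=J_f$ a.e. (the discussion after Definition \ref{def:quasiconformal}), so \eqref{eq:Ln energy thm} becomes \eqref{eq:Df and J 2} and $f$ is quasiconformal; finally $J_f>0$ a.e. (otherwise $\nabla f=0$ on a set of positive measure, contradicting injectivity via the $\mathrm{ACL}$ property), so the second part of Proposition \ref{prop:homeo} gives $K_f=K_f^{\mathrm{fine}}$ a.e.

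\textbf{Reduction to a one-dimensional estimate.} To obtain $f\in W^{1,1}_{\loc}(\Om;\R^n)$ it suffices, by \eqref{eq:pointwise variation in a direction} applied in each of the $n$ coordinate directions together with \eqref{eq:definition of total variation}, to bound $\int_{\pi_n(\Om_0)}\pV(f_z,(\Om_0)_z)\,d\mathcal L^{n-1}(z)$ and its $n-1$ analogues for every $\Om_0\Subset\Om$; and to get $W^{1,1}$ rather than merely $\BV$ one arranges that this bound carries an $L^1$ majorant, forcing the slices $f_z$ to be absolutely continuous on a.e. line. First rotate the coordinate axes so that all $n$ coordinate directions lie in the $\mathcal H^{n-1}$-conull set of ``good'' directions from the hypothesis, and fix one such direction, say $e_n$. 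Since $f$ is a homeomorphism, for a.e. $z$ the map $f_z$ is a continuous injection on each component of $(\Om_0)_z$, so $\pV(f_z,(\Om_0)_z)=\mathcal H^1(f(L_z\cap\Om_0))$ with $L_z=\{z\}\times\R$. Thus the goal reduces to bounding $\int_{\pi_n(\Om_0)}\mathcal H^1(f(L_z\cap\Om_0))\,d\mathcal L^{n-1}(z)$ by a constant times $\int_{\Om_0}h^{(n-1)/n}J_f^{1/n}\,d\mathcal L^n$, which is finite since by Hölder's inequality it is at most $(\int_\Om h)^{(n-1)/n}(\int_\Om J_f)^{1/n}$ and $h\in L^{p^*(n-1)/n}(\Om)\subseteq L^1(\Om)$.

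\textbf{The covering argument.} Fix $\Om_0\Subset\Om$ and a ``good'' $z$ with $\mathcal H^1(f(L_z\cap E))=0$. For $t$ with $(z,t)\notin E$ and $J_f(z,t)<\infty$, the definition of $K_f^{\mathrm{fine}}$ furnishes a $1$-finely open $U\ni(z,t)$ with $\diam f(B((z,t),\rho)\cap U)\le(h(z,t)+\eps)^{(n-1)/n}|f(B((z,t),\rho))|^{1/n}$ for small $\rho$; the key point — a fact from the fine potential theory of \cite{LahGFQ}, ultimately resting on Definition \ref{def:1 fine topology} and Lemma \ref{lem:capa in small ball} — is that for $\mathcal H^{n-1}$-a.e. direction and a.e. slice $L_z$ the $1$-thin set $\R^n\setminus U$ does not accumulate at $(z,t)$ along $L_z$, so that after shrinking $\rho$ the segment $\{z\}\times(t-\rho,t+\rho)$ lies in $U$ and hence $\diam f_z((t-\rho,t+\rho))\le(h(z,t)+\eps)^{(n-1)/n}|f(B((z,t),\rho))|^{1/n}$. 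For a scale $\delta>0$ these intervals form a fine cover of the good part of $(\Om_0)_z$; extract, by the Vitali covering theorem, a countable disjoint subfamily $\{(t_k-\rho_k,t_k+\rho_k)\}$ with $\rho_k<\delta$ covering a.e. of it. Estimating $\mathcal H^1(f(L_z\cap\Om_0))$ by the diameters of the images of these intervals — the $E$-part contributes nothing, and the Vitali remainder together with $\{J_f=\infty\}\cap L_z$ are $\mathcal H^1$-null in $L_z$ and dispatched using the hypothesis on $E$ and the pointwise-Lipschitz behaviour of $f$ along $L_z$ off these sets — and applying Hölder on the sum gives $\mathcal H^1(f(L_z\cap\Om_0))\lesssim\sum_k(h((z,t_k))+\eps)^{(n-1)/n}|f(B_k)|^{1/n}+o(1)_{\delta\to0}$ with $B_k=B((z,t_k),\rho_k)$. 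Integrating over $z$: for fixed $\delta$ the balls $\{B_k\}$ have bounded overlap as $z$ varies, and, after applying the Vitali--Carath\'eodory Theorem \ref{thm:VitaliCar} to the $L^1$ function $h^{(n-1)/n}J_f^{1/n}$ to replace point values by a lower semicontinuous majorant, the right-hand side integrates to at most a constant times $\int_{\Om_0}h^{(n-1)/n}J_f^{1/n}\,d\mathcal L^n$ plus an error vanishing as $\delta\to0$. Letting $\delta\to0$, then $\eps\to0$, and repeating for the other coordinate directions gives $\Var(f,\Om_0)<\infty$ with the $L^1$ majorant, hence $f\in W^{1,1}_{\loc}(\Om;\R^n)$.

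\textbf{Main obstacle.} The delicate part is this one-dimensional step: one must know (i) that $1$-finely open sets restrict to essentially relatively open subsets of $\mathcal H^{n-1}$-a.e. line in $\mathcal H^{n-1}$-a.e. direction, so that the diameter bounds defining $K_f^{\mathrm{fine}}$ can be read along lines at all; (ii) account precisely for the exceptional set $E$ — this is exactly where $\mathcal H^1(f(L\cap E))=0$ is used — and for the auxiliary null sets such as $\{J_f=\infty\}$ and the Vitali leftover; and (iii) pass from the discrete Vitali sums to the continuous integral of $h^{(n-1)/n}J_f^{1/n}$, for which the Vitali--Carath\'eodory theorem is essential. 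Everything else, including the verification of the Hölder exponents and the assembly of the $n$ coordinate directions, is routine bookkeeping.
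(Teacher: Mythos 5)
Your first step (the bootstrap from $f\in W^{1,1}_{\loc}$ to $D^p$, and to quasiconformality when $p=n$, via Proposition \ref{prop:homeo}, \eqref{eq:Jacobian basic} and H\"older/Young) matches the paper and is fine. The genuine gap is in the covering argument, at the point you yourself flag as ``the key point'': the claim that for a.e.\ line $L_z$ and a point $(z,t)\in L_z\setminus E$, the $1$-thin set $\R^n\setminus U$ does not accumulate at $(z,t)$ along $L_z$, so that a small segment of $L_z$ around $(z,t)$ lies in $U$. No such fact holds, and none is proved in \cite{LahGFQ}. A countable set has $1$-capacity zero, hence is $1$-thin at every point; so $U:=\R^n\setminus\{x+k^{-1}e_n:k\in\N\}$ is a $1$-finely open neighbourhood of $x$ whose complement accumulates at $x$ along the line through $x$ in direction $e_n$. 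Since the sets $U_x$ are only guaranteed to be $1$-finely open (they come from an infimum in the definition of $K_f^{\mathrm{fine}}$), this can happen at \emph{every} point of \emph{every} line in \emph{every} direction, and averaging over directions or slices cannot rescue the statement: the capacity condition controls the measure of lines \emph{near} $x$ that meet $\R^n\setminus U_x$ in $B(x,r)$, but says nothing about the single line passing through $x$ itself. Consequently your Vitali cover of $(\Om_0)_z$ by intervals contained in the fine sets need not exist, and the estimate $\diam f_z((t-\rho,t+\rho))\le\diam f(B((z,t),\rho)\cap U)$ has no basis.

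The paper circumvents exactly this obstruction by a different mechanism: it never asks a fine set $U_x$ to contain a segment of the line through its own center. Instead it builds an iterated Whitney covering $\{B_{j,k,l}\}$ and uses the fine sets $U_{j,k,l}=U_{x_{j,k,l}}\cap B_{j,k,l}$ as a \emph{cover} of the line (minus $E$ and minus a leftover set $N_j$); the displacement along a segment $\gamma$ is then bounded by $\sum\diam f(U_{j,k,l})$ over the fine sets that merely \emph{meet} $\gamma$, which is dominated by $\int_\gamma g_j\,ds$ for the bump functions $g_j$ supported on $2B_{j,k,l}$. The price is that one must show the uncovered leftover $N_j=\bigcap_k D_{j,k}^*$ is purely $\mathcal H^{n-1}$-unrectifiable (via the capacity decay \eqref{eq:small capacity} and Lemma \ref{lem:unrectifiable}), so that a.e.\ line misses it by Federer's theorem; and the geometric decay \eqref{eq:estimate list} of $\int_{W_{j,k}}(h+2\eps)$ across the iteration is what makes $\sum_{k,l}$ summable in \eqref{eq:gj estimate with sum}. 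These two ingredients — pure unrectifiability of the leftover and the geometrically decaying nested covers — are the actual substance of the proof and are entirely absent from your proposal; without them (or a correct substitute), the one-dimensional estimate cannot be obtained. A secondary, repairable issue is that in your slice-wise Vitali argument the balls $B((z,t_k),\rho_k)$ for varying $z$ have no bounded overlap, so the integration over $z$ of $\sum_k|f(B_k)|$ is not controlled by $|f(\Om)|$ as written.
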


Recall that here $D^p(\Om;\R^2)$ is the Dirichlet space, that is, $f$ is not necessarily
in $L^p(\Om;\R^2)$, only in $L^1_{\loc}(\Om;\R^2)$.

\begin{proof}
	We can assume that $\Om$ is nonempty, and at first we also assume that
	$\Om$ is bounded.
	The crux of the proof is to show $D^1$-regularity. For this, we use the fact that
	$h\in L^{p^*/2}(\Om)\subset L^1(\Om)$.
	First we assume also that $h$ is lower semicontinuous.
	
		Fix $0<\eps\le 1$.
		To every $x\in\Om\setminus E$, there corresponds a $1$-finely open set $U_x\ni x$
		for which
		\[
		\lim_{r\to 0}\frac{\capa_1(B(x,r)\setminus U_{x})}{r}=0
		\]
		and
		\[
		\limsup_{r\to 0}\frac{\diam f(B(x,r)\cap U_x)}{|f(B(x,r))|^{1/2}}
		< K_f^{\mathrm{fine}}(x)^{1/2}+\eps.
		\]
		For each $j\in \N$, let $A_j$ consist of points $x\in\Om\setminus E$ for which
		\begin{equation}\label{eq:1 over j capa}
		\sup_{0<r\le 1/j}\frac{\capa_1(B(x,r)\setminus U_{x})}{r}
		<\frac{1}{20}
		\end{equation}
		and
		\begin{equation}\label{eq:Kf estimate}
			\sup_{0<r\le 1/j}\frac{\diam f(B(x,r)\cap U_x)}{|f(B(x,r))|^{1/2}}
			<K_f^{\mathrm{fine}}(x)^{1/2}+\eps,
		\end{equation}
		and also (recall the lower semicontinuity of $h$)
		\begin{equation}\label{eq:h estimate with a}
			K_f^{\mathrm{fine}}(x)< h(y)+\eps\quad \textrm{for all }y\in B(x,1/j)\cap \Om.
		\end{equation}
		We have $\Om=\bigcup_{j=1}^{\infty}A_j\cup E$.
		For $j=1,2\ldots$, we inductively define
		\[
		D_j:=\Om\cap\overline{A_j\setminus \cup_{l=1}^{j-1}D_l}  \setminus \bigcup_{l=1}^{j-1}D_l;
		\]
		note that we do not know the sets $A_j$ to be measurable but the sets $D_j$ are Borel sets.
		Moreover, the sets $D_j$ are disjoint and
		$D_j \supset A_j\setminus \bigcup_{l=1}^{j-1}D_l$, so that
		$\Om= E\cup \bigcup_{j=1}^{\infty}D_j$.
		We can pick open sets $W_j\supset D_j$ such that $W_j\subset \Om$ and
		\begin{equation}\label{eq:Wj Dj}
		\int_{W_j}(h+2\eps)\,d\mathcal L^2
		\le \int_{D_j}(h+2\eps)\,d\mathcal L^2+2^{-j}\eps
		\end{equation}
		and such that
		\begin{equation}\label{eq:Wj Dj image side}
		|f(W_j)|\le |f(D_j)|+2^{-j}\eps.
		\end{equation}

		Fix $R>0$.
		For each $k\in\N$, using Lemma \ref{lem:Whitney} we
		take a Whitney-type covering
		\[
		\{B_{j,k}=B(x_{j,k},r_{j,k})\}_k
		\]
		of $A_{j}\setminus \bigcup_{l=1}^{j-1}D_l$ in $W_{j}$ at scale
		$\min\{R,1/j\}$.
		By Lemma \ref{lem:Whitney}(1), we know that $D_j\subset \bigcup_{k}\tfrac 12 B_{j,k}$ and so
		\begin{equation}\label{eq:Whitney balls contain also D}
			\Om\setminus E\subset \bigcup_{j,k}\tfrac 12 B_{j,k}.
		\end{equation}
		For each point $x_{j,k}$, there is the corresponding $1$-finely open set $U_{x_{j,k}}$.
		Denote $U_{j,k}:= U_{x_{j,k}}\cap B_{j,k}$.
		For any $x\in\R^2$ and $r>0$, denote a circle by $S(x,r)$.
			From \eqref{eq:1 over j capa} and Lemma \ref{lem:polar projection}, we obtain that
		there exists $s_{j,k}\in (r_{j,k}/2,r_{j,k})$ such that $S(x_{j,k},s_{j,k})\subset U_{j,k}$.
		
		Define
		\[
		g:=2\sum_{j,k} 
		\frac{\diam f(U_{j,k}) }{r_{j,k}}\ch_{B_{j,k}}.
		\]
		
		By assumption, for almost every line $L$ in the direction of a coordinate axis,
		$L\cap E$ is at most countable.
		Take a line segment $\gamma\colon [0,\ell]\to L\cap \Om$ in such a line $L$,
		with length  $\ell>0$.
		We denote also the image of $\gamma$ by the same symbol.
		If $\ell\ge R$, we have
		\begin{equation}\label{eq:ug inequality}
		\int_\gamma g\,ds	
		\ge 2\sum_{j,k,\,\gamma\cap \tfrac 12 B_{j,k}\neq \emptyset}\int_\gamma \frac{\diam f(U_{j,k}) }{r_{j,k}}\ch_{B_{j,k}}\,ds
		\ge \sum_{j,k,\,\gamma\cap \tfrac 12 B_{j,k}\neq \emptyset}\diam f(U_{j,k}).
		\end{equation}
		By \eqref{eq:Whitney balls contain also D}, we have
		\[
		\gamma\setminus E \subset \bigcup_{j,k}\tfrac 12 B_{j,k}.
		\]
		Let $0<\delta<R$. Since  $L\cap E$ is at most countable, using the continuity
		of $f$ we find a finite or countable collection of disks $\{B_l\}$ 
		intersecting $\gamma\cap E$ such that
		$\overline{B_l}\subset \Om$ and
		\[
		\gamma\cap E\subset \bigcup_l B_l
		\quad\textrm{and}\quad
		\sum_l \diam f(\overline{B_l})<\delta.
		\]
		Since the disks
		$\tfrac 12 B_{j,k}$ and $B_l$ are open, there is in fact a finite number of
		them covering $\gamma$. 
		Thus there are finite index sets $I_1$ and $I_2$ such that
		every disk $\tfrac 12 B_{j,k}$ with $(j,k)\in I_1$ intersects $\gamma$ and
		\[
		\gamma
		\subset \bigcup_{(j,k)\in I_1}\tfrac 12 B_{j,k}\cup \bigcup_{l\in I_2} B_l
		\subset \bigcup_{(j,k)\in I_1} B(x_{j,k},s_{j,k})\cup \bigcup_{l\in I_2} B_l.
		\]
		We find subsets $J_1\subset I_1$ and $J_2\subset I_2$ such that
		among the disks $B(x_{j,k},s_{j,k})$, $(j,k)\in J_1$, $B_l$, $l\in J_2$,
		no disk is fully contained in another disk, and we still have
		\[
		\gamma
		\subset \bigcup_{(j,k)\in J_1}B(x_{j,k},s_{j,k})
		\cup \bigcup_{l\in J_2} B_l.
		\]
		Relabel the disks $B(x_{j,k},s_{j,k})$, $(j,k)\in J_1$, and $B_l$, $l\in J_2$,
		as $B(y_1,r_1),\ldots,B(y_M,r_M)$.
		We can assume that $\gamma$ is in the $x_2$-coordinate direction.
		Denote by $z_m$ the $x_2$-coordinate of the point in $\gamma\cap \overline{B}(y_m,r_m)$
		with the smallest $x_2$-coordinate.
		We can assume that the disks $B(y_1,r_1),\ldots,B(y_M,r_M)$
		are ordered such that $z_1\le \ldots \le z_M$.
		Since these disks cover $\gamma$,
		for each $m=1,\ldots,M-1$ we have that $\overline{B}(y_{m+1},r_{m+1})$ necessarily intersects
		$\bigcup_{m'=1}^m\overline{B}(y_{m'},r_{m'})\cap \gamma$, and since none of the disks
		$\overline{B}(y_{m'},r_{m'})$, $m'\in\{1,\ldots,M\}$ is contained in another one,
		in fact $S(y_{m+1},r_{m+1})$ necessarily intersects $\bigcup_{m'=1}^m S(y_{m'},r_{m'})$.
		In total, $\bigcup_{m=1}^M S(y_{m},r_{m})$ is a connected set.
		It follows that
		\begin{equation}\label{eq:big connected set}
		\diam\left(\left(\bigcup_{m=1}^{M}f(S(y_m,r_m))\right)\right)
		\le \sum_{m=1}^{M}\diam f(S(y_m,r_m)).
		\end{equation}
		Denote by $\omega$ a modulus of continuity of $f$ at the end points of the curve $\gamma$.
		Then
		\begin{align*}
			|f(\gamma(0))-f(\gamma(\ell))|
			&\le \diam\left(\left(\bigcup_{m=1}^{M}f(S(y_m,r_m))\right)\right)+2\omega(R)\\
			&\le \sum_{m=1}^M\diam f(S(y_m,r_m))+2\omega(R)
			\quad\textrm{by }\eqref{eq:big connected set}\\
			&\le \sum_{j,k,\,\gamma\cap \tfrac 12 B_{j,k}\neq \emptyset}
			\diam f(S(x_{j,k},s_{j,k}))+\sum_l \diam f(\overline{B_l})+2\omega(R)\\
			&\le \sum_{j,k,\,\gamma\cap \tfrac 12 B_{j,k}\neq \emptyset}\diam f(U_{j,k})+\sum_l \diam f(\overline{B_l})+2\omega(R)\\
			&\le \int_\gamma g\,ds +\delta+2\omega(R)
			\quad\textrm{by }\eqref{eq:ug inequality}.
			\end{align*}
	Letting $\delta\to 0$, we get
	\begin{equation}\label{eq:ug estimate}
			|f(\gamma(0))-f(\gamma(\ell))|
			\le \int_\gamma g\,ds +2\omega(R).
	\end{equation}
		By Young's inequality we have for any $b_1,b_2\ge 0$ and $0<\kappa\le 1$ that
		\begin{equation}\label{eq:generalized Young}
			\begin{split}
			b_1b_2=\kappa^{1/2} b_1 \kappa^{-1/2} b_2
			&\le \frac{1}{2}\kappa b_1^2 + \frac{1}{2}\kappa^{-1} b_2^{2}.
			\end{split}
		\end{equation}	
For every $j\in\N$, we estimate
\begin{align*}
	2\pi^{-1}\sum_{k}\frac{\diam f(U_{j,k})}{r_{j,k}} | B_{j,k}|
	&  \le 2\sum_{k}\diam f(U_{j,k})r_{j,k}\\
	 &   \le 2\sum_{k} |f(B_{j,k})|^{1/2}
	 \left(K_f^{\mathrm{fine}}(x_{j,k})^{1/2}+\eps\right)r_{j,k}
	 \quad\textrm{by }\eqref{eq:Kf estimate}\\
	 &  \le\kappa\sum_{k} |f(B_{j,k})|
	+2\kappa^{-1}\sum_{k} (K_f^{\mathrm{fine}}(x_{j,k})+\eps)r_{j,k}^{2}
	\quad\textrm{by }\eqref{eq:generalized Young}.
\end{align*}
Using \eqref{eq:h estimate with a}, we estimate further
\begin{align*}
	&2\pi^{-1} \sum_{k}\frac{\diam f(U_{j,k})}{r_{j,k}} | B_{j,k}|\\
	&\   \le \kappa\sum_{k} |f(B_{j,k})|+
	2 \kappa^{-1}\sum_{k} \int_{B_{j,k}}(h+2\eps)\,d\mathcal L^2\\
	&\   \le 6400 \kappa |f(W_j)|\quad\textrm{by Lemma }\ref{lem:Whitney}(3)\textrm{ and injectivity}\\
	&\qquad  +
	12800 \kappa^{-1} \int_{W_{j}}(h+2\eps)\,d\mathcal L^2
	\quad \textrm{by Lemma }\ref{lem:Whitney}(3)\\
	&\   \le 6400\kappa (|f(D_j)|+2^{-j}\eps)+\\
	&\qquad +12800 \kappa^{-1} 
	\left(\int_{D_j}(h+2\eps)\,d\mathcal L^2+2^{-j}\eps\right)
	\quad\textrm{by }\eqref{eq:Wj Dj},\eqref{eq:Wj Dj image side}.
\end{align*}
It follows that for every $j\in\N$,
\begin{equation}\label{eq:gj estimate with sum}
	\begin{split}
		\pi^{-1}\int_{\Om}g\,d\mathcal L^2
		&= 2\pi^{-1}\sum_{j,k}\frac{\diam f(U_{j,k})}{r_{j,k}} \mathcal |B_{j,k}|\\
			&\le 6400\kappa \sum_{j=1}^{\infty}(|f(D_j)|+2^{-j}\eps)+\\
		&\qquad +12800 \kappa^{-1} 
		\sum_{j=1}^{\infty}\left(\int_{D_j}(h+2\eps)\,d\mathcal L^2+2^{-j}\eps\right)\\
		& \le 6400\kappa (|f(\Om)|+\eps)
		+ 12800\kappa^{-1} \left(\int_{\Om}(h+2\eps)\,d\mathcal L^2+\eps\right)\\
		&  \le 6400\kappa |f(\Om)| +12800\kappa^{-1}\int_{\Om} (h+2\eps) \,d\mathcal L^2
		+19200\kappa^{-1}\eps.
	\end{split}
\end{equation}

We can pick functions $g$ as above, with the choices $R=1/i$, to obtain a sequence
$\{g_i\}_{i=1}^{\infty}$.
Recall the definition of pointwise variation from \eqref{eq:pointwise variation},
as well as \eqref{eq:orthogonal}.
By \eqref{eq:ug estimate}, for $\mathcal L^{1}$-a.e. $z\in\pi_2(\Om)$  we get
for any line segment $\gamma\colon [0,\ell]\to \Om$ with $\gamma(s):=(z,t+s)$ for some $t\in\R$ that
\[
|f(\gamma(0))-f(\gamma(\ell))|\le \liminf_{i\to\infty}\left(\int_{\gamma} g_i\,ds+2\omega(1/i)\right)
=\liminf_{i\to\infty}\int_{\gamma} g_i\,ds,
\]
and so
\[
\pV(f_z,\Om_z)
\le \liminf_{i\to\infty}\int_{\Om_z} g_i\,ds.
\]
We estimate
\begin{align*}
		&\int_{\pi_2(\Om)}\pV(f_z,\Om_z)\,d\mathcal L^{1}(z)\\
		&\qquad \le \int_{\pi_2(\Om)}\liminf_{i\to\infty}\int_{\Om_z} g_i\,ds\,d\mathcal L^{1}(z)\\
		&\qquad \le \liminf_{i\to\infty}\int_{\pi_2(\Om)}\int_{\Om_z} g_i\,ds\,d\mathcal L^{1}(z)
		\quad\textrm{by Fatou's lemma}\\
		&\qquad = \liminf_{i\to\infty}\int_{\Om} g_i\,d\mathcal L^{2}\quad\textrm{by Fubini}\\
		&\qquad \le   6400 \pi \kappa |f(\Om)| +12800  \pi \kappa^{-1}\int_{\Om} (h+2\eps) \,d\mathcal L^2
		+19200 \pi \kappa^{-1}  \eps
		\quad\textrm{by }\eqref{eq:gj estimate with sum}.
	\end{align*}
Recall \eqref{eq:pointwise variation in a direction}. Since we can do the above calculation
also in the $x_1$-coordinate direction, we obtain
\[
\Var(f,\Om) \le 12800\pi\kappa |f(\Om)| 
+25600\pi \kappa^{-1}\int_{\Om} (h+2\eps) \,d\mathcal L^2+38400  \pi\kappa^{-1} \eps.
\]
Letting $\eps\to 0$,
we get the estimate
\[
\Var(f,\Om)\le 25600\pi \left(\kappa |f(\Om)| +\kappa^{-1}\int_{\Om}h\,d\mathcal L^2\right)<\infty.
\]
All of the reasoning so far can be done also in every open subset $W\subset\Om$,
and so we have in fact
\[
|Df|(W)\le 25600\pi \left(\kappa |f(W)| +\kappa^{-1}\int_{W}h\,d\mathcal L^2\right).
\]
By considering small $\kappa>0$, we find that $|Df|$
is absolutely continuous with respect to $\mathcal L^2$ in $\Om$.
Thus we get $f\in D^1(\Om;\R^2)$, and choosing $\kappa=1$, we get the estimate
\[
\int_{\Om}|\nabla f|\,d\mathcal L^2
\le 25600\pi\left( |f(\Om)| +\int_{\Om} h \,d\mathcal L^2\right).
\]

Now we remove the assumption that $h$ is lower semicontinuous.
Using the Vitali-Carath\'eodory theorem (Theorem \ref{thm:VitaliCar})
we find a sequence $\{h_i\}_{i=1}^{\infty}$
of lower semicontinuous functions
in $L^1(\Om)$ such that $h\le h_{i+1}\le h_i$ for all $i\in\N$, and
$h_i\to h$ in $L^1(\Om)$.
Thus we get
\begin{equation}\label{eq:L1 energy}
\int_{\Om}|\nabla f|\,d\mathcal L^2
\le \liminf_{i\to\infty}25600\pi\left( |f(\Om)| +\int_{\Om} h_i \,d\mathcal L^2\right)
\le 25600\pi\left( |f(\Om)| +\int_{\Om} h \,d\mathcal L^2\right).
\end{equation}

Now we prove that in fact $f\in D^p(\Om;\R^2)$.
Note that $K_f^{\mathrm{fine}}<\infty$ a.e. in $\Om$.
Since $f\in D^1(\Om,\Om')\subset W_{\loc}^{1,1}(\Om;\Om')$, by Proposition \ref{prop:homeo} we have 
\begin{equation}\label{eq:nabla f p estimate}
K_f^{\mathrm{fine}}(x)^{1/2}J_f(x)^{1/2}
=\frac{2}{\pi^{1/2}}\Vert \nabla f(x)\Vert
\quad \textrm{for a.e. }x\in \Om.
\end{equation}
In the case $1<p<2$, by Young's inequality, and recalling that $h\ge K_f^{\mathrm{fine}}$, we get
\begin{equation}\label{eq:Lp energy}
\frac{2^p}{\pi^{p/2}}\int_{\Om} \Vert \nabla f\Vert ^p\,d\mathcal L^2
\le \int_{\Om} J_f\,d\mathcal L^2 + \int_{\Om} h^{p/(2-p)}\,d\mathcal L^2<\infty
\end{equation}
by \eqref{eq:Jacobian basic} and by the assumption $h\in L^{p^*/2}(\Om)$.
Thus $f\in D^p(\Om;\R^2)$.

In the case $p=2$, 
we have $K_f^{\mathrm{fine}}\le h\in L^{\infty}(\Om)$,
and then from \eqref{eq:nabla f p estimate} we get
\begin{equation}\label{eq:p is n case estimate}
	\Vert \nabla f(x)\Vert ^2
	\le \frac{\pi}{4}\Vert h\Vert_{L^{\infty}(\Om)}J_f(x)
	\quad \textrm{for a.e. }x\in \Om.
\end{equation}
This shows that $f\in D^{2}(\Om;\Om')$.
By Definition \ref{def:quasiconformal} and the discussion below it, we then have that in fact
$f$ is quasiconformal;
note that $\nabla f(x)$ is now a classical gradient for a.e. $x\in\Om$.
Moreover, using e.g. \cite[Theorem 9.8]{HKST} we know that $f^{-1}$ is also quasiconformal and thus absolutely
continuous in measure,
and so $J_f(x)>0$ for a.e. $x\in \Om$. Thus by Proposition \ref{prop:homeo}, we have
$K_f(x)=K_f^{\mathrm{fine}}(x)$ for a.e. $x\in\Om$, and
from \eqref{eq:p is n case estimate} we obtain that \eqref{eq:Ln energy thm} holds.

Finally, using the Dirichlet energy estimates \eqref{eq:L1 energy}, \eqref{eq:Lp energy},
and \eqref{eq:p is n case estimate},
it is easy to generalize to the case where $\Om$ is unbounded.
\end{proof}

\begin{proof}[Proof of Theorem \ref{thm:main}]
	For every direction $v\in \partial B(0,1)$, the
	intersection of $E$ with almost every line $L$ parallel to $v$
	is at most countable, see e.g. \cite[p. 103]{Vai}.
	For every bounded open set $\Om\subset \R^2$,
	we have $|f(\Om)|<\infty$ since $f$ is a homeomorphism, and then by Theorem \ref{thm:main text}
	we have $f\in D^p(\Om;\R^2)$,
	and in the case $p=2$, moreover $K_f(x)=K_f^{\mathrm{fine}}(x)$ for a.e. $x\in\Om$ and
	\[
	\Vert \nabla f(x)\Vert ^2
	\le \frac{\pi}{2}\Vert K_f^{\mathrm{fine}}\Vert_{L^{\infty}(\Om)}J_f(x)
	\quad \textrm{for a.e. }x\in \Om.
	\]
	Thus $f\in W^{1,p}_{\loc}(\R^2;\R^2)$,
	and in the case $p=2$, we have $K_f(x)=K_f^{\mathrm{fine}}(x)$ for a.e. $x\in\R^2$ and
	\[
	\Vert \nabla f(x)\Vert ^2
	\le \frac{\pi}{2}\Vert K_f^{\mathrm{fine}}\Vert_{L^{\infty}(\R^n)}J_f(x)
	\quad \textrm{for a.e. }x\in \R^2.
	\]
	Thus $f$ is quasiconformal in $\R^2$.
\end{proof}

\begin{proof}[Proof of Corollary \ref{cor:main}]
This follows immediately from Theorem \ref{thm:main}.
\end{proof}

In closing, we note that certain open problems arise naturally from our work.
Koskela--Rogovin \cite[Corollary 1.3]{KoRo} prove that in the definition of $K_f$,
one can replace ``$\limsup$'' by ``$\liminf$'', and the result
(analogous to Theorem \ref{thm:main}) still holds.
Thus one can ask: does Theorem \ref{thm:main} still hold if
``$\limsup$'' is replaced by ``$\liminf$'' in the definition of $K_f^{\mathrm{fine}}$?

Another question is: can our results be generalized to $\R^n$ with $n\ge 3$,
and further to metric measure spaces?
Much of the literature on the topic, discussed in the Introduction, in fact deals with the setting of quite general
metric measure spaces.
We observe that most of the quantities and techniques used in the proof of Theorem \ref{thm:main}
make sense also in metric spaces, but in deriving
\eqref{eq:big connected set} we relied heavily
on the structure of the plane.


\begin{thebibliography}{ACMM}

\bibitem{AFP}L. Ambrosio, N. Fusco, and D. Pallara,
\textit{Functions of bounded variation and free discontinuity problems.}
Oxford Mathematical Monographs. The Clarendon Press, Oxford University Press, New York, 2000. xviii+434 pp.

\bibitem{BaKo}Z. Balogh and P. Koskela,
\textit{Quasiconformality, quasisymmetry, and removability in Loewner spaces}.
With an appendix by Jussi V\"ais\"al\"a.
Duke Math. J. 101 (2000), no. 3, 554--577.

\bibitem{BKR}Z. Balogh, P. Koskela, and S. Rogovin,
\textit{Absolute continuity of quasiconformal mappings on curves},
Geom. Funct. Anal. 17 (2007), no. 3, 645--664.

\bibitem{BB}A. Bj\"orn and J. Bj\"orn,
\textit{Nonlinear potential theory on metric spaces},
EMS Tracts in Mathematics, 17. European Mathematical Society (EMS), Z\"urich, 2011. xii+403 pp.

\bibitem{CDLP}M. Carriero, G. Dal Maso, A. Leaci, and E. Pascali,
\textit{Relaxation of the nonparametric plateau problem with an obstacle},
J. Math. Pures Appl. (9) 67 (1988), no. 4, 359--396.

\bibitem{EvGa}L. C. Evans and R. F. Gariepy,
\textit{Measure theory and fine properties of functions.}
Revised edition. Textbooks in Mathematics. CRC Press, Boca Raton, FL, 2015. xiv+299 pp. 

\bibitem{Fan}A. Fang,
\textit{The ACL property of homeomorphisms under weak conditions},
Acta Math. Sinica (N.S.) 14 (1998), no. 4, 473--480.

\bibitem{Ge62}F. W. Gehring,
\textit{Rings and quasiconformal mappings in space},
Trans. Amer. Math. Soc. 103 (1962), 353--393.

\bibitem{Ge}F. W. Gehring, 
\textit{The $L^p$-integrability of the partial derivatives of a quasiconformal mapping},
Acta Math. 130 (1973), 265-277.

\bibitem{HKST}J. Heinonen, P. Koskela, N. Shanmugalingam, and J. Tyson,
\textit{Sobolev classes of Banach space-valued functions and quasiconformal mappings},
J. Anal. Math. 85 (2001), 87--139.

\bibitem{HKSTbook}J. Heinonen, P. Koskela, N. Shanmugalingam, and J. Tyson,
\textit{Sobolev spaces on metric measure spaces.
An approach based on upper gradients},
New Mathematical Monographs, 27. Cambridge University Press, Cambridge, 2015. xii+434 pp. 

\bibitem{KaKo}S. Kallunki and P. Koskela,
\textit{Exceptional sets for the definition of quasiconformality},
Amer. J. Math. 122 (2000), no. 4, 735--743.

\bibitem{KaMa}S. Kallunki and O. Martio,
\textit{ACL homeomorphisms and linear dilatation},
Proc. Amer. Math. Soc. 130 (2002), no. 4, 1073--1078.

\bibitem{KLV}J. Kinnunen, J. Lehrb\"ack, and A. V\"ah\"akangas,
\textit{Maximal function methods for Sobolev spaces},
Mathematical Surveys and Monographs, 257. American Mathematical Society, Providence, RI, 2021. xii+338

\bibitem{KoRo}P. Koskela and S. Rogovin,
\textit{Linear dilatation and absolute continuity},
Ann. Acad. Sci. Fenn. Math. 30 (2005), no. 2, 385--392.

\bibitem{L-Fed}P. Lahti,
\textit{A Federer-style characterization of sets of finite perimeter on metric spaces},
Calc. Var. Partial Differential Equations 56 (2017), no. 5, Paper No. 150, 22 pp.

\bibitem{L-FC}P. Lahti,
\textit{A notion of fine continuity for BV functions on metric spaces},
Potential Anal. 46 (2017), no. 2, 279--294.

\bibitem{Mal}J. Mal\'y,
\textit{Absolutely continuous functions of several variables},
J. Math. Anal. Appl. 231 (1999), no. 2, 492–508.

\bibitem{MaMa}J. Mal\'y and O. Martio,
\textit{Lusin's condition (N) and mappings of the class $W^{1,n}$},
J. Reine Angew. Math. 458 (1995), 19--36.

\bibitem{MaMo}G. A. Margulis and G. D.  Mostow,
\textit{The differential of a quasi-conformal mapping of a Carnot-Carathéodory space},
Geom. Funct. Anal. 5 (1995), no. 2, 402--433.

\bibitem{Vai}J. V\"ais\"al\"a,
\textit{Lectures on $n$-dimensional quasiconformal mappings},
Lecture Notes in Mathematics, Vol. 229. Springer-Verlag, Berlin-New York, 1971. xiv+144 pp.

\bibitem{Wi}M. Williams,
\textit{Dilatation, pointwise Lipschitz constants, and condition N on curves},
Michigan Math. J. 63 (2014), no. 4, 687--700.

\end{thebibliography}
\end{document}